\newcommand{\R}{\mathbb{R}}
\newtheorem{theorem}{Theorem}[section]
\newtheorem{corollary}{Corollary}
\newtheorem{lemma}[theorem]{Lemma}
\newtheorem{proposition}[theorem]{Proposition}
\theoremstyle{definition}
\newtheorem{remark}{Remark}
\def\ds{\displaystyle}
\begin{document}
\title{\LARGE\bf{On inequalities of Bliss-Moser type   with loss of compactness in $\R^N$  } }
\date{}
 \author{Yanyan Guo$^{1}$, Huxiao Luo$^{2}$$\thanks{{\small Corresponding author. E-mail: yanyangcx@126.com (Y. Guo),
 luohuxiao@zjnu.edu.cn (H. Luo), bernhard.ruf@unimi.it (B. Ruf).}}$, Bernhard Ruf
 $^{3}$\\
\small 1 School of Mathematics and Statistics, Central China Normal University, Wuhan 430079, China \\
\small 2 Department of Mathematics, Zhejiang Normal University, Jinhua, Zhejiang, 321004, China \\
\small 3 Istituto Lombardo - Accademia di Scienze e Lettere, 20121 Milan, Italy
}\maketitle

\begin{center}
		\begin{minipage}{13cm}
			\par
			\small  {\bf Abstract:}
   We prove the following {\it Limiting Bliss inequalities}
\begin{equation}\nonumber
\sup\limits_{v(0) = 0, \int_0^1|v'|^Ndx=1 }\int_0^1 e^{\beta\left(\log\frac{e}{s}\right)\frac{v^N(s)}{s^{N-1}}}ds\leq C(N,\beta), \ \hbox{ for } \beta \le 1
\end{equation}
The inequalities are optimal with respect to $\beta \le 1$; there is compactness for $\beta<1$, and along the {\it infinitesimal Moser sequence} for $\beta = 1$.

Moreover, we show that the improved inequalities
\begin{equation}\nonumber
\sup\limits_{v(0) = 0, \int_0^1|v'|^Ndx=1 }\int_0^1 e^{\left(\log\frac{e}{s}+\gamma\log\log\frac{e}{s}\right)\frac{v^N(s)}{s^{N-1}}}ds\leq C(N,\gamma)
\end{equation}
hold for $\gamma\leq1$, and for $\gamma=1$ the inequalities are critical with loss of compactness. The inequalities are optimal: no further improvement in the coefficient of the exponent is possible.

The second result extends the result in \cite{DRU} from $N=2$ to general dimensions $N\geq2$.

			\vskip2mm
			\par
			{\bf Keywords:} Trudinger-Moser inequalities, Bliss inequalities, Loss of compactness.
			\vskip2mm
			\par
			{\bf MSC(2010): } 35J20, 35J25, 35J50.
			
		\end{minipage}
	\end{center}

\section{Introduction}
\setcounter{equation}{0}
It is well-known that the Trudinger-Moser type inequalities can be considered as the
borderline case of the Sobolev embeddings. Indeed, since $$W^{1,N} (\R^N)\hookrightarrow L^q(\R^N),~\forall q\in [N,+\infty),~~ \text{but} ~W^{1,N} (\R^N)\not\hookrightarrow L^\infty(\R^N),$$ it has been shown by Trudinger \cite{Trudinger}
(see also Yudovich \cite{Yudovich} and Pohozaev \cite{Pohozaev}) that $W^{1,N}_0(\Omega)\hookrightarrow L_{\varphi_N}(\Omega)$, where $\Omega\subset\R^N$
is a domain with finite measure, $L_{\varphi_N}(\Omega)$ is the Orlicz space associated with the Young
function $\varphi_N(t)= e^{\alpha|t|^{\frac{N}{N-1}}} - 1$ for some $\alpha>0$.  Several years later, Moser \cite{Moser} was able to simplify
Trudinger's proof, and to determine the optimal threshold; for convenience, we state the result as follows:
\begin{proposition} (\cite{Moser}, Moser-Trudinger Inequality). There exists a positive
constant $C_0$ depending only on $N$ such that
\begin{equation}\label{MT}
\sup\limits_{u\in C^1_c(\Omega),\int_{\Omega}
|\nabla u|^N\leq1}
\int_{\Omega}
e^{\alpha|u|^{\frac{N}{N-1}}}
dx \leq C_0|\Omega|
\end{equation}
holds for all $\alpha \leq \alpha_N = N[\omega_{N-1}]^{\frac{1}{N-1}}$, where $\Omega$ is any bounded domain in $\R^N$.
Moreover, when $\alpha > \alpha_N$, the above supremum is always infinite.
\end{proposition}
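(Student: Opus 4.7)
The proposition splits into the upper bound for $\alpha\le\alpha_N$ and the sharpness claim for $\alpha>\alpha_N$. The upper bound follows Moser's two-step reduction: first a Schwarz symmetrization, then a one-dimensional change of variables that converts the problem into a sharp exponential estimate on the half-line, which is the technical heart of the matter. The divergence for $\alpha>\alpha_N$ is exhibited by the standard Moser concentrating sequence.

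\emph{Reduction to a radial one-dimensional problem.} Given $u\in C^1_c(\Omega)$, pass to the Schwarz symmetrization $u^*$, a non-negative, radially non-increasing function supported in the ball $B_R$ with $|B_R|=|\Omega|$. The P\'olya--Szeg\H{o} inequality gives $\int|\nabla u^*|^N\le\int|\nabla u|^N$, while equimeasurability preserves the exponential integral, so it suffices to treat radial non-increasing $u$ on $B_R$. Setting $r=Re^{-t/N}$ and $\phi(t)=\alpha_N^{(N-1)/N}u(r(t))$, a direct computation shows $\phi(0)=0$, $\int_0^\infty|\phi'|^N\,dt=\int_{B_R}|\nabla u|^N\,dx\le 1$, and
\[
\int_{B_R} e^{\alpha|u|^{N/(N-1)}}\,dx \;=\; |B_R|\int_0^\infty e^{(\alpha/\alpha_N)\,\phi(t)^{N/(N-1)}-t}\,dt.
\]
Everything therefore reduces to proving the sharp one-dimensional bound
\[
I(\phi):=\int_0^\infty e^{\phi(t)^{N/(N-1)}-t}\,dt \;\le\; C_N
\]
uniformly over absolutely continuous $\phi$ with $\phi(0)=0$ and $\|\phi'\|_{L^N([0,\infty))}\le 1$.

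\emph{The one-dimensional estimate (the main obstacle).} By H\"older, $\phi(t)^{N/(N-1)}\le t\,\|\phi'\|_{L^N([0,t])}^{N/(N-1)}\le t$, so the exponent is non-positive; however this alone only yields $I(\phi)\le\int_0^\infty 1\,dt=\infty$. Moser's sharp argument refines this by monitoring the residual energy $g(t):=\int_t^\infty|\phi'|^N\,ds$: on any interval where $a(t):=\phi(t)^{N/(N-1)}-t$ stays close to $0$, most of the $L^N$-mass of $\phi'$ has already been expended, which in turn forces $\phi$ to grow so slowly afterwards that $a$ must decrease linearly in $t$. Organising these localised estimates over a partition of $[0,\infty)$ by level sets of $a$ (as in Moser's original paper, or equivalently via the Adams--Garsia rearrangement) yields the uniform constant $C_N$. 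The critical value $\alpha_N$ enters precisely through the scaling used in the reduction step, which is why no larger constant is admissible.

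\emph{Sharpness.} For $\alpha>\alpha_N$, test the inequality on the Moser sequence
\[
m_n(x)=\omega_{N-1}^{-1/N}\begin{cases}(\log n)^{(N-1)/N}, & |x|\le R/n,\\[2pt] (\log n)^{-1/N}\log(R/|x|), & R/n\le|x|\le R,\\[2pt] 0, & \text{otherwise},\end{cases}
\]
which (after a routine mollification to lie in $C^1_c$) satisfies $\int|\nabla m_n|^N=1$. On the inner ball one has $e^{\alpha m_n^{N/(N-1)}}=n^{N\alpha/\alpha_N}$, so
\[
\int_{B_{R/n}} e^{\alpha m_n^{N/(N-1)}}\,dx \;=\; |B_R|\,n^{N(\alpha/\alpha_N-1)} \;\longrightarrow\; +\infty \qquad (n\to\infty),
\]
whenever $\alpha>\alpha_N$, which completes the proof.
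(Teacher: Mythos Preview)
The paper does not give its own proof of this proposition: it is quoted as a known result of Moser, and the surrounding discussion merely outlines Moser's strategy (Schwarz symmetrization followed by the change of variables $|x|^N/R^N=e^{-t}$ reducing to the one-dimensional inequality of Proposition~\ref{prop1.2}, which is again stated without proof). Your reduction step matches that outline exactly, and your sharpness argument via the Moser sequence is the standard one the paper also alludes to.

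The one point worth flagging is that your paragraph ``The one-dimensional estimate (the main obstacle)'' is a description rather than a proof: you correctly observe that the trivial H\"older bound $\phi(t)^{N/(N-1)}\le t$ is not enough, and you name the right mechanism (tracking the residual energy $g(t)=\int_t^\infty|\phi'|^N\,ds$ and partitioning by level sets of $a(t)$), but the actual combinatorial/analytic argument that turns this into a uniform bound is not carried out. Since the paper itself defers this to \cite{Moser}, your level of detail is consistent with the paper's treatment; just be aware that if a self-contained proof were required, this is precisely the step that would need to be written out in full.
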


In the course of his proof of inequality \eqref{MT}, J. Moser used symmetrization arguments,
%Specifically, let $(X, \mu)$ be a measure space, then for a function $f: X \to [-\infty,+\infty]$, the distribution function of $f$ is given by $$\lambda_f (t) = \mu(\{x \in X : |f(x)| > t\}),\quad t\geq 0,$$ and its nonincreasing rearrangement $f^* : [0,+\infty) \to [0,+\infty)$ is defined by $$f^*(t) = \inf\{s : \lambda_f (s) \leq t\},\quad t\geq 0.$$ When $X = \Omega$ is an open set in $\R^N$ and $\mu$ is the Lebesgue measure, the Schwartz symmetrization or the symmetric decreasing rearrangement of $f$ is defined as the function $f^{\sharp} : \Omega^* \to [0,+\infty]$  $$ f^{\sharp}(x) = f^*\left(\frac{\omega_{N-1}}{N}|x|^N\right),$$ where $\Omega^*$ is the open ball in $\R^N$ centered at $0$ and it has the same measure as $\Omega$. Then it is well known that $f$ and $f^{\sharp}$ are equi-measurable, and therefore, they preserve their $L^p$ norms. One of the important inequalities that make symmetrization useful is the following P\'{o}lya Szeg\"{o} Inequality $$\int_{\Omega^*}|\nabla u^{\sharp}|^pdx\leq \int_{\Omega}|\nabla u|^pdx,\quad \forall u \in W^{1,p}_0 (\Omega),~~1 \leq p < +\infty.$$
by which it is sufficient to prove the Moser-Trudinger inequality for
$\Omega = B_R(0)$ and for functions $u \in W^{1,N}_0 (B_R(0))$ that are radial and monotonically
decreasing. A further change of variable
$$\frac{|x|^N}{R^N}=e^{-t},\quad w(t):=N^{N-\frac{1}{N}}[\omega_{N-1}]^{\frac{1}{N}}u\left(Re^{-\frac{t}{N}}\right)$$
converts the involved integrals as follows:
\begin{equation*}
 \aligned
&\int_{B_R(0)}|\nabla u|^Ndx=\int_0^{+\infty}|w'(t)|^Ndt,\\
&\frac{1}{|B_R(0)|}\int_{B_R(0)}e^{\alpha|u|^{\frac{N}{N-1}}}dx=\int_0^{+\infty}
e^{\beta|w|^{\frac{N}{N-1}}-t}dt,\endaligned
\end{equation*}
where $\beta = \frac{\alpha}{\alpha_N}$, and \eqref{MT} then can be established by proving the following one-dimensional
inequality.
\begin{proposition} \label{prop1.2}(\cite{Moser}). Let $2 \leq p < +\infty$, $0 < \beta\leq 1$ and $\frac{1}{p}+\frac{1}{q} = 1$, then there exists a constant
$C(p, \beta) > 0$ such that the inequality
\begin{equation}\label{20240917-e2}
\int_0^{+\infty}
e^{\beta|w|^{q}-t}dt \leq C(p, \beta)
\end{equation}
holds for all $C^1$ functions $w : [0,+\infty) \to [0, +\infty)$ satisfying $$w(0) = 0,~w' \geq 0~ \text{and} ~\int_0^{+\infty}
|w'(t)|^pdt \leq 1.$$
\end{proposition}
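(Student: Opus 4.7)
The plan is to reduce the exponential integral to a linear estimate on a distribution function. By Hölder's inequality applied to $w(t)=\int_0^t w'(s)\,ds$ with the conjugate pair $(p,q)$,
\[
w(t)^q\le t\,\phi(t)^{q-1}\le t,\qquad \phi(t):=\int_0^t (w'(s))^p\,ds\in[0,1],
\]
so $e^{\beta w^q-t}\in(0,1]$ and the layer-cake identity yields
\[
\int_0^{+\infty} e^{\beta w^q-t}\,dt=\int_0^{+\infty} M_\beta(\tau)\,e^{-\tau}\,d\tau,\qquad M_\beta(\tau):=\bigl|\{t\ge 0: t-\beta w(t)^q<\tau\}\bigr|.
\]
Hence the inequality \eqref{20240917-e2} is equivalent to a linear bound $M_\beta(\tau)\le C(p,\beta)\,\tau$ uniformly over admissible $w$.

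For the subcritical range $\beta<1$ the estimate is immediate: $t-\beta w^q\ge(1-\beta)\,t$ yields $M_\beta(\tau)\le \tau/(1-\beta)$, and integration against $e^{-\tau}\,d\tau$ gives $C(p,\beta)\le 1/(1-\beta)$. In the critical case $\beta=1$ the naive bound degenerates, so I would use the sharper estimate $t-w^q\ge t\bigl(1-\phi(t)^{q-1}\bigr)$ and split $[0,+\infty)$ at $T_\ast:=\inf\{t:\phi(t)\ge 1/2\}$. On $[0,T_\ast]$ one has $t-w^q\ge(1-2^{1-q})\,t$, contributing at most $C_p\,\tau$ to $M_1(\tau)$. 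On $[T_\ast,+\infty)$ the ``shifted'' Hölder estimate
\[
w(t)\le w(T_\ast)+(t-T_\ast)^{1/q}\bigl(1-\phi(T_\ast)\bigr)^{1/p},
\]
combined with the refined bound $w(T_\ast)^q\le T_\ast/2^{q-1}$, is designed to show that $t-w^q$ grows at least linearly in $t-T_\ast$ with a universal rate, confining $\{t-w^q<\tau\}\cap[T_\ast,+\infty)$ to a window of length $O(\tau)$.

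The hard part will be making this second estimate uniform in $T_\ast$, which depends on $w$ and can be arbitrarily large. The calibration is dictated by the Moser extremal profile $w_T(t):=(t\wedge T)/T^{1/p}$, along which $t-w_T^q\equiv 0$ on $[0,T]$; a direct calculation shows that on this profile the set $\{t-w_T^q<\tau\}$ consists of three boundary layers of size $\sim\tau$ near $t=0$, $t=T$ and $t=+\infty$, giving $M_1(\tau)\le C(p)\,\tau$ uniformly in $T$. I expect the general bound to follow by iterating the splitting along the dyadic thresholds $T_k:=\inf\{t:\phi(t)\ge 1-2^{-k}\}$, estimating each shell $[T_k,T_{k+1}]$ by a shifted Hölder inequality with energy budget $2^{-k}$, and summing the telescoping contributions; the resulting geometric series converges because $p\ge 2$ forces $q-1\le 1$, so $1-(1-2^{-k})^{q-1}\sim(q-1)2^{-k}$.
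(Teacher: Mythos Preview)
The paper does not prove Proposition~\ref{prop1.2}; it is quoted from Moser's original article as background for the reduction of the Trudinger--Moser inequality to one dimension. There is therefore no in-paper argument to compare yours against, and I assess your attempt on its own terms.

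Your layer-cake rewriting is correct (though ``equivalent to $M_\beta(\tau)\le C\tau$'' overstates matters: that linear bound is sufficient, not necessary), and the subcritical case $\beta<1$ is clean and complete. The critical case $\beta=1$, however, is explicitly only a programme: you write ``is designed to show'', ``the hard part will be'', ``I expect''. The gap is genuine. Your proposed dyadic iteration over $T_k=\inf\{t:\phi(t)\ge 1-2^{-k}\}$ does not close as written: on the $k$-th shell the pointwise bound
\[
t-w(t)^q\ \ge\ t\bigl(1-\phi(t)^{q-1}\bigr)\ \ge\ c_q\,2^{-k}\,t
\]
only confines $\{t-w^q<\tau\}\cap[T_k,T_{k+1}]$ to $\{t<C\,2^{k}\tau\}$, and $\sum_k 2^{k}\tau$ diverges. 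The asymptotics $1-(1-2^{-k})^{q-1}\sim(q-1)2^{-k}$ that you invoke is precisely what makes this tail blow up, not what tames it. To rescue the argument one must also exploit the shell lengths $T_{k+1}-T_k$, but these are not a~priori controlled: a fixed energy budget $2^{-k-1}$ can be spread over an arbitrarily long interval, so the shifted-H\"older inequality alone does not tie $w(T_k)^q$, $T_k$, and $T_{k+1}-T_k$ together in a summable way. Your verification on the Moser profile $w_T$ succeeds only because there the shell lengths happen to decay geometrically, $T_{k+1}-T_k=T\,2^{-k-1}$; this feature is special to the extremal and is exactly what a proof must supply for general $w$.

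In short, the subcritical part is fine; the critical part---which is the whole content of Moser's theorem---remains to be proved.
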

In addition, it was shown, surprisingly, by L. Carleson and S.-Y. A. Chang \cite{CSYAC}
that the best exponent $\beta = 1$ in \eqref{20240917-e2} is attained. This is in sharp contrast to the case of
the critical Sobolev embeddings where it is known that the best constant is never
attained on bounded domains.

Studies have also extended \eqref{MT} to unbounded domains, one can refer to the works cited in \cite{AT}, \cite{IM}, \cite{R1}. Furthermore, Trudinger-Moser type inequalities have been generalized to include weights in the exponential integral \cite{AS, CT, Tarsi}. Additionally, these Trudinger-Moser inequalities are closely related to elliptic equations with nonlinearities of critical Trudinger-Moser growth. Since the foundational work of Adimurthi \cite{AA} and subsequent research by de Figueiredo, Miyagaki, and Ruf \cite{FMR}, there has been extensive work on the existence of solutions for such equations. For more, refer to \cite{LL, O}, and the cited references within. The existence of solutions in the slightly supercritical range has also been explored \cite{R2, R3}. It is important to note that all these results concern inequalities and equations with critical Trudinger-Moser growth present a lack of compactness. In all cases, this non-compactness is due to a single point concentration which occurs at the origin in the radially symmetric case (respectively at infinity in the transformed Moser inequality of Proposition \ref{prop1.2}).

 In the work \cite{DRU}, do \'{O}, Ruf and Ubilla consider a new sharp one-dimensional integral inequality of Moser type with a variable coefficient $c(x)$ in the exponent which is singular at the origin. In this case, the non-compactness is due to a blow up of the gradient near the origin for sequences of functions which tend uniformly to zero. Some related  details on this phenomenon are provided below.

In the study of the Trudinger-Moser inequality, an important role  is played by the so-called
Moser-sequence, which is defined by the broken line functions
\begin{equation}\label{broken line functions}
v_j(t):=
\left\{
\begin{array}{ll}
\aligned
&j^{-\frac{1}{N}}t,\quad 0\leq t\leq j, \\
&j^{\frac{N-1}{N}},\quad t\geq j.
\endaligned
\end{array}
\right.
\end{equation}
As observed by J. Moser, these functions are maximizers of the basic inequality
\begin{equation}\label{20240917-e1}
y(t)\leq t^{1-\frac{1}{N}}\left(\int_0^{+\infty}|y'(\tau)|^Nd\tau\right)^{\frac{1}{N}},
\end{equation}
for all
$$
y\in \left\{u \in W^{1,N}(0, +\infty): u(0) = 0\right\}.$$
For $j \to+\infty$, the sequence of functions $v_j$ concentrates at $+\infty$, in the sense that
$v_j$ tend pointwise to zero in every $0 < j < +\infty$, while tending to infinity at $+\infty$.
Furthermore, $v'_j(t)$ is a vanishing sequence, since $v'_j(t)\to 0$ uniformly on $[0, +\infty)$.

The Moser sequence \eqref{broken line functions}
 serves to demonstrate the optimality of the inequality \eqref{20240917-e2}: for any $\beta>1$
 $$\int_0^{+\infty}
e^{\beta|v_j|^{\frac{N}{N-1}}-t}dt\geq \int_j^{+\infty}
e^{\beta j-t}dt=e^{j(\beta-1)}\to+\infty,~~\text{as}~j\to+\infty.$$
The Moser sequence \eqref{broken line functions} also shows the non-compactness of Moser's functional: one has
$$v_j\rightharpoonup 0 ~\text{in}~ W^{1,N}(0,+\infty) ~\text{while}~\int_0^{+\infty}e^{v_j^{\frac{N}{N-1}}-t}dt\to 2>1=\int_0^{+\infty}e^{-t}dt. $$

On the other hand, if we look at the Moser sequence \eqref{broken line functions} near zero, replacing $j$ by $\frac{1}{j}$, or more
precisely, considering
\begin{equation}\label{moser}
w_j(t):=
\left\{
\begin{array}{ll}
\aligned
&j^{\frac{1}{N}}t\ ,\quad 0\leq t\leq\frac{1}{j}, \\
&\frac 1 {j^{\frac{N-1}{N}}}\ ,\quad \frac{1}{j}\leq t\leq1,
\endaligned
\end{array}
\right.
\end{equation}
then $w_j$ again maximizes \eqref{20240917-e1} on $\{w \in H^1(0, 1), w(0) = 1\}$. However, the sequence $w_j$ exhibits dual characteristics in comparison to $v_j$, namely, $w_j$ is a vanishing sequence, converging uniformly to zero on the interval $[0, 1]$, while its derivative $w'_j(t)$ concentrates near $0$, tending pointwise to zero in the open interval $(0,1)$, and tending to $+\infty$ near the origin. We will call the sequence $w_j(t)$ the {\it infinitesimal Moser sequence}.

Now, similar to the Sobolev inequalities, we consider the Bliss inequalities \cite{Bliss}: $\forall N>1, k\geq1$ (if $k=1$, then it is the Hardy inequality \cite{Hardy})
\begin{equation}\label{Bliss}
	\begin{array}{ll}
\ds \int_0^1\left(\frac{u^N}{x^{N-1+\frac{1}{k}}}\right)^kdx\leq C_{N,k}\left(\int_0^1|u'|^Ndx\right)^k, \vspace{0.2cm} \\
 \hbox{ for all } \ u \in W^{1,N}(0,1) \ \hbox{ with } \ u(0) = 0 \ ,
 \end{array}
\end{equation}
where
\begin{equation}\label{CBliss}
C_{N,k}=\frac{1}{(N-1)k}\left[\frac{(k-1)\Gamma(\frac{Nk}{k-1})}{\Gamma(\frac{1}{k-1})\Gamma(\frac{Nk-1}{k-1})}\right]^{k-1}.
\end{equation}
By \eqref{Bliss}, using the estimate for the Bliss embedding constants $C_{N,k}$ and the Taylor expansion of the exponential function, we first prove a $N-$ dimensional  {\it Limiting-Bliss inequality} as follows
$$
\sup\limits_{w(0)=0, \int_0^1 |w'(s)|^Nds=1}\int_0^1 e^{\beta\,\log\frac{e}{s}\,\frac{w^N(s)}{s^{N-1}}}ds\leq C(N, \beta) \ , \ \hbox{ for } \ \beta < 1.
$$

Moreover, we prove that this Limiting-Bliss inequality has compactness for the infinitesimal Moser sequence \eqref{moser}.
Let
$$E_N:= \left\{u \in W^{1,N}(0, 1): u(0) = 0, \int_0^1 |u'|^Ndx=1\right\}.$$
\begin{theorem}\label{Th1}
For $N\geq2$, let
\begin{equation}\nonumber
I_\beta(v)=\int_0^1 e^{\beta\,\log\frac{e}{s}\,\frac{v^N(s)}{s^{N-1}}}ds.
\end{equation}
Then
\begin{equation}\label{supI}
\sup\limits_{v\in E_N}I_\beta(v)<\infty\Longleftrightarrow \beta\leq1.
\end{equation}
\noindent
The exponent $\beta=1$ is optimal; for $\beta < 1$ the functioal $I_\beta$ is weakly continuous  on $E_N$, and for $\beta = 1$ the functional $I_1$ is weakly continuous along the infinitesimal Moser sequence \eqref{moser}.
More precisely, we show
\vspace{-0.2cm}
\begin{equation}\nonumber
    I_\beta (w_j)\to
    \left\{
    \begin{array}{lcl}
         +\infty,\ \ &\hbox{if}\ \beta>1,\\
         1=I_1(0),\ \ &\hbox{if}\ \beta=1,
    \end{array}
    \right.
\end{equation}
and for any sequence  $\{u_n\}\subset E_N$  with $u_n \rightharpoonup u $
$$
\hspace{-1cm} I_\beta (u_n) \to I_\beta(u) \ , \ \ \qquad \hbox{if}\ \beta <  1.
$$
\end{theorem}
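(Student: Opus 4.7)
The plan is to treat the four distinct claims of the theorem in turn.

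\textbf{Upper bound for $\beta\le 1$.} I would begin by Taylor-expanding
\[I_\beta(v)=\sum_{k=0}^\infty\frac{\beta^k}{k!}\int_0^1\bigl(\log(e/s)\bigr)^k\frac{v^{Nk}(s)}{s^{(N-1)k}}\,ds,\]
and rewriting the $k$-th integrand as $s(\log(e/s))^k\cdot v^{Nk}/s^{(N-1)k+1}$. An elementary one-variable optimization yields $\max_{s\in(0,1]} s(\log(e/s))^k=k^k e^{1-k}$ (attained at $s=e^{1-k}$), so the Bliss inequality \eqref{Bliss} applied with exponent $k$ controls the $k$-th integral by $k^k e^{1-k}C_{N,k}$ for every $v\in E_N$. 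By Stirling, $k^k e^{-k}/k!\sim 1/\sqrt{2\pi k}$; on the other hand, expanding the Gamma-factors in \eqref{CBliss} around $r=1/(k-1)\to 0^+$ and using $r\Gamma(r)\to 1$ shows that the inner bracket of \eqref{CBliss} converges to $1$ at rate $O(r)$, so its $(k-1)$-th power converges to a finite limit depending only on $N$. This yields $C_{N,k}=O(1/k)$, whence the full series is dominated by $\sum_k C\beta^k/k^{3/2}$, which converges for every $\beta\le 1$.

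\textbf{The infinitesimal Moser sequence.} On $[0,1/j]$ one has $w_j^N(s)/s^{N-1}=js$, so the substitution $t=js$ gives
\[\int_0^{1/j} e^{\beta\log(e/s)\cdot js}\,ds=\frac{1}{j}\int_0^1 j^{\beta t}e^{\beta t(1-\log t)}\,dt,\]
and a Laplace-type estimate around $t=1$ (where $j^{\beta t}\sim j^{\beta}$) shows this piece is of order $j^{\beta-1}/\log j$. This diverges for $\beta>1$, yielding both the necessity direction of \eqref{supI} and $I_\beta(w_j)\to+\infty$, and is $o(1)$ for $\beta=1$. On $[1/j,1]$, $w_j^N/s^{N-1}=1/(js)^{N-1}$, and the substitution $u=js$ rewrites the integral as $\frac1j\int_1^j e^{(1+\log j-\log u)/u^{N-1}}\,du$. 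A direct splitting into a window of width $O(1/\log j)$ around $u=1$ (whose contribution is $O(1/\log j)$ after the prefactor) and its complement (where the exponent is $o(1)$ uniformly, so that $e^{\cdot}-1$ is controlled linearly) shows this integral equals $1-1/j+o(1)\to 1$. Hence $I_1(w_j)\to 1=I_1(0)$.

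\textbf{Weak continuity for $\beta<1$.} For $u_n\rightharpoonup u$ in $E_N$, the compact embedding $W^{1,N}(0,1)\hookrightarrow C([0,1])$ gives uniform, hence pointwise, convergence, so the integrand converges a.e. For equi-integrability, fix $\beta'\in(\beta,1)$, set $p=\beta'/\beta>1$, and for any measurable $A\subset(0,1)$ apply H\"older:
\[\int_A e^{\beta\log(e/s)u_n^N/s^{N-1}}\,ds\le\Bigl(\int_0^1 e^{\beta'\log(e/s)u_n^N/s^{N-1}}\,ds\Bigr)^{1/p}|A|^{1-1/p}\le C(N,\beta')^{1/p}|A|^{1-1/p},\]
using the bound from the first step applied with the subcritical $\beta'$. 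Vitali's theorem then gives $I_\beta(u_n)\to I_\beta(u)$, and a subsequence argument extends this to the full sequence.

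The main obstacle is the sharp asymptotic $C_{N,k}=O(1/k)$ in the first step: the Gamma-ratio in \eqref{CBliss} must be analyzed carefully enough to see that the $(k-1)$-th power of the bracket has a finite limit, not merely that it stays bounded. This precise decay, combined with the $1/\sqrt{k}$ from Stirling, is exactly what allows the series $\sum\beta^k/k^{3/2}$ to converge up to and including the critical threshold $\beta=1$; divergence for $\beta>1$ is then confirmed separately by the Moser-sequence computation.
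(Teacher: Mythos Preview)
Your Taylor--Bliss argument for the upper bound is essentially the paper's Proposition~2.2, but with one notable improvement: you replace the ratio test by the Stirling bound $k!\ge\sqrt{2\pi k}(k/e)^k$, which turns the $k$-th term into $O(\beta^k k^{-3/2})$ and hence gives convergence \emph{including} $\beta=1$. The paper in fact asserts that ``the case $\beta=1$ cannot be proved with the Taylor series method'' and instead deduces it from the much harder Theorem~\ref{Th2}; your sharper estimate shows this detour is unnecessary, which is a genuine simplification. (Your phrase ``rate $O(r)$'' for the bracket in $C_{N,k}$ is a bit loose---what you need is that the bracket is $1+O(1/k)$ so that its $(k-1)$-st power stays bounded; the paper's Proposition~\ref{pro1} carries out exactly this computation.)

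For weak continuity when $\beta<1$ you go through Vitali via a H\"older splitting, which works. The paper's route is more direct: the elementary inequality $u(s)\le s^{(N-1)/N}\big(\int_0^1|u'|^N\big)^{1/N}$ gives $u_n^N(s)/s^{N-1}\le 1$ uniformly, hence $e^{\beta\log(e/s)u_n^N/s^{N-1}}\le (e/s)^\beta\in L^1(0,1)$, and dominated convergence finishes. Your argument is correct but more work than needed.

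There is one genuine gap, in your treatment of $I_1(w_j)\to 1$ on the interval $[1/j,1]$. After the change of variable $u=js$ you claim that outside a window of width $O(1/\log j)$ around $u=1$ the exponent $(1+\log j-\log u)/u^{N-1}$ is $o(1)$ uniformly. This is false: at $u=2$ the exponent is $(1+\log j-\log 2)/2^{N-1}\sim(\log j)/2^{N-1}\to\infty$, and in fact the exponent stays large until $u$ is of order $(\log j)^{1/(N-1)}$. So the linear control of $e^{\cdot}-1$ you invoke is unavailable on that region. The fix is the paper's: take a window of \emph{fixed} width $\epsilon>0$ in the $u$-variable (equivalently $[1/j,(1+\epsilon)/j]$ in $s$), on which the crude bound $(e/s)^{w_j^N/s^{N-1}}\le e/s$ gives contribution $\le e\log(1+\epsilon)$; on the complement $[(1+\epsilon)/j,1]$ the exponent is $\le (1+\epsilon)^{-(N-1)}<1$, giving a fixed $L^1$ majorant $(e/s)^{(1+\epsilon)^{-(N-1)}}$, and dominated convergence yields limit $1$. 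Sending $\epsilon\to 0$ then gives $I_1(w_j)\to 1$.
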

\par \bigskip

Since the Limiting-Bliss inequality does not show non-compactness for the borderline case $\beta = 1$, it is not critical in this sense.

We  mention that in \cite{DRU}, do \'{O}, Ruf and Ubilla give an optimal $\log\log$ improvement of the limiting Bliss inequality in $E_2$, for which the infinitesimal Moser sequence $w_j(t)$, as given in \eqref{moser}, plays an analogous role to the Moser sequence in the optimal TM-inequality. That is:\par
- to distinguish between subcriticality and supercriticality;
\par
 - to show non-compactness for the critical case.
\par \medskip
Inspired by \cite{DRU}, we give a $\log\log$ type Bliss-Moser inequality in $E_N$, which is critical with loss of compactness. To be specific, we extend the result in \cite{DRU} from $N = 2$ to general dimensions $N \geq 2$.

\begin{theorem}\label{Th2}
For $N\geq2$, let
\begin{equation}\nonumber
J_\gamma(v)=\int_0^1 e^{\left(\log\frac{e}{s}+\gamma\log\log\frac{e}{s}\right)\frac{v^N(s)}{s^{N-1}}}ds.
\end{equation}
Then
\begin{equation}\nonumber
\sup\limits_{v\in E_N}J_\gamma(v)<\infty\Longleftrightarrow \gamma\leq1.
\end{equation}
For $\gamma=1$, the inequality is critical with loss of compactness: the functional $J_1$ fails to be weakly continuous along the  infinitesimal Moser sequence
\eqref{moser}.
More precisely,
\begin{equation}\nonumber
    J_\gamma (w_j)\to
    \left\{
    \begin{array}{lcl}
         +\infty,\ \ &\hbox{if}\ \gamma>1,\\
         c\ge e+1>1=J_1(0),\ \ &\hbox{if}\ \gamma=1,\\
         1=J_\gamma(0),\ \ &\hbox{if}\ \gamma<1.
    \end{array}
    \right.
\end{equation}
\end{theorem}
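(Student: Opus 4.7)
The plan is to prove Theorem 2 in two stages: first, the upper bound $\sup_{v\in E_N} J_\gamma(v) < \infty$ for $\gamma\le 1$; second, the three-case asymptotic of $J_\gamma(w_j)$, which simultaneously delivers optimality when $\gamma>1$, loss of weak continuity at $\gamma=1$, and compactness $J_\gamma(w_j)\to 1$ along the infinitesimal Moser sequence for $\gamma<1$.

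For the upper bound, I first observe that $\log\log(e/s)\ge 0$ on $(0,1]$, so $J_\gamma\le J_1$ whenever $\gamma\le 1$ and it suffices to treat $\gamma=1$. Setting $F(s):=v^N(s)/s^{N-1}$, Moser's pointwise bound \eqref{20240917-e1} gives $F\le 1$, and
$$J_1(v)=\int_0^1 (e/s)^{F(s)}\bigl(\log(e/s)\bigr)^{F(s)}\,ds.$$
I would Taylor-expand the exponent and integrate term by term against the Bliss inequality \eqref{Bliss}, using $\int_0^1 F^m\,s^{-1}\,ds\le C_{N,m}=O(1/m)$. A crude estimate via $\sup_s s(\log(e/s))^k(\log\log(e/s))^l$ combined with Stirling only converges for $\gamma<1/2$. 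To reach the critical $\gamma=1$ I would pass via the logarithmic substitution $s=e^{-t}$, $w(t):=v(e^{-t})e^{(N-1)t/N}$, which rewrites
$$J_1(v)=e\int_1^\infty \tau\,e^{-(1-W^N(\tau))(\tau+\log\tau)}\,d\tau,\quad W(\tau):=w(\tau-1)\in[0,1],$$
with transformed constraint $\int_0^\infty|w'-\tfrac{N-1}{N}w|^N\,dt=1$, and then carry out a Moser-style level-set decomposition in $\tau$: the constraint on $W$ forces the deficit $1-W^N(\tau)$ to be large enough on average to tame the weight $\tau^{\gamma W^N}$ up to $\gamma=1$. This is the $N$-dimensional extension of the argument of \cite{DRU}.

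For $J_\gamma(w_j)$ I would split $J_\gamma(w_j)=A_j+B_j$ with $A_j=\int_0^{1/j}$ and $B_j=\int_{1/j}^1$. On $[0,1/j]$ one has $F_j(s)=js$, so the substitution $u=js$ yields $A_j=\tfrac{1}{j}\int_0^1 j^u(e/u)^u(\log(ej/u))^{\gamma u}\,du$; Laplace's method near $u=1$, with $u=1-r/\log j$, gives $A_j\sim e(\log j)^{\gamma-1}$ as $j\to\infty$. On $[1/j,1]$ one has $F_j(s)=(js)^{-(N-1)}$; with $x=js$ the integral $B_j$ decomposes into a concentration piece near $x=1$ yielding $\sim\tfrac{e}{N-1}(\log j)^{\gamma-1}$, and a bulk piece on $x\gg 1$ where $F_j\to 0$ pointwise and the integrand tends to $1$, contributing $1$ in the limit. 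Summing, $J_\gamma(w_j)=1+\tfrac{eN}{N-1}(\log j)^{\gamma-1}(1+o(1))$, which tends to $1$ for $\gamma<1$, equals $1+\tfrac{eN}{N-1}\ge e+1$ for $\gamma=1$, and is $+\infty$ for $\gamma>1$, exactly the trichotomy in the statement.

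The hard part will be the upper bound at the critical $\gamma=1$: a naive combination of Taylor expansion and Bliss inequalities is sharp only up to $\gamma<1/2$, because it separates the logarithmic weight from $F^m$ by passing to suprema. The full range $\gamma\le 1$ requires keeping the weight coupled to the function via the Moser-style level-set analysis in the transformed variable $\tau$, where the Sobolev-type constraint on $W$ is used to control the measure of the set where $W^N$ is close to $1$.
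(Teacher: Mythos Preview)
Your asymptotic analysis of $J_\gamma(w_j)$ is sound and in fact more precise than the paper's: the paper only establishes $\liminf J_1(w_j)\ge e+1$ and $\limsup J_1(w_j)<\infty$ by splitting $(0,1)$ into four pieces and crude pointwise bounds, whereas your Laplace-type computation pins down the limit as $1+\tfrac{eN}{N-1}$. This part of the proposal is fine.

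The genuine gap is in your upper bound at $\gamma=1$. After the substitution $s=e^{-t}$, $w(t)=v(e^{-t})e^{(N-1)t/N}$, the constraint becomes $\int_0^\infty |w'-\tfrac{N-1}{N}w|^N\,dt=1$, which is \emph{not} a constraint on $|w'|^N$ alone. Moser's level-set argument relies on $w(t)\le t^{(N-1)/N}(\int_0^t|w'|^N)^{1/N}$, and with the mixed constraint you have no analogous control on how fast $W^N(\tau)$ may approach $1$ or on the measure of $\{W^N>1-\eta\}$. Your sentence ``the Sobolev-type constraint on $W$ is used to control the measure of the set where $W^N$ is close to $1$'' is the whole difficulty, not a step one can simply invoke; nothing in the transformed picture replaces the Hölder bound that drives Moser's proof.

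The paper's route is entirely different and stays in the $s$-variable. The key structural input is a set of ``approximate Moser function'' lemmas: if $\max_{s\in(0,1]}\frac{v^N(s)}{s^{N-1}}=1-\delta$ is attained at $s=a$, then
\[
\Big(\tfrac{v(a)}{a}\Big)^{N-2}\!\int_0^a\Big|v'-\tfrac{v(a)}{a}\Big|^2\,ds+\int_a^1|v'|^N\,ds\le\delta,
\]
from which one extracts quantitative pointwise bounds $v(s)\le s\,a^{-1/N}+s^{(N-1)/N}(2\delta)^{1/N}$ on $(0,a)$ and $v(s)\le v(a)+(s-a)^{(N-1)/N}\delta^{1/N}$ on $(a,1)$. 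The finiteness of $\sup J_1$ then follows from a trichotomy: either $\delta\ge\delta_0$ uniformly (trivial), or $a\ge a_0$ uniformly (easy), or both $a_j\to0$ and $\delta_j\to0$ along some sequence. In the last case one further splits on whether $\delta_j\log\tfrac{1}{a_j}$ stays bounded below or tends to $0$, and decomposes $(0,a_j)$ and $(a_j,1)$ into geometric layers of width $\sim a_jc\delta_j$ (resp.\ $\sim a_j/\log\tfrac{1}{a_j}$) on which $\frac{v_j^N}{s^{N-1}}\le 1-k\delta_j$ (resp.\ $1-\tfrac{k}{2\log(1/a_j)}$), summing the resulting geometric series. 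This is the content that your ``level-set decomposition in $\tau$'' would have to reproduce, and it does not come from the transformed constraint without essentially redoing these lemmas.
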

\begin{remark}
The non-compactness of $J_\gamma$ for $\gamma = 1$ occurs, as in analogous situations with single point blow up, due to an {\it asymptotic invariance} of $J_1$ for the infinitesimal Moser sequence, as can be seen in the proof of Proposition \ref{p3.1} below.
\end{remark}
\begin{corollary}\label{cor}
	We remark that $J_1$ is optimal with respect to any perturbation $h(\frac 1s)$ with $h(\frac 1s) \to \infty$, as $s \to 0$, in the exponent. More precisely, let $h : [1,\infty)\to \mathbb R^+$ continuous, increasing, with $h(t) \to +\infty$ as $t \to 0$, and $\lim_{t\to \infty}\frac {h(t)}{\log\log et} = 0$. Then
	$$\sup_{v \in E_N} J_{1,h}(v) := \sup_{v \in E_N}\int_0^1e^{(\log \frac es + \log\log \frac es + h(\frac 1s))\frac{v^N}{s^{N-1}}} = \infty.
	$$
Example: $h(\frac 1s) = \log\log\log \frac {e^e}s$.
\end{corollary}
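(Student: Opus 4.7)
The plan is to test $J_{1,h}$ against the infinitesimal Moser sequence $\{w_j\}$ from \eqref{moser} and show $J_{1,h}(w_j)\to\infty$; since each $w_j\in E_N$, this forces $\sup_{v\in E_N}J_{1,h}(v)=+\infty$. The intuition is that in the critical case $\gamma=1$ treated in Theorem \ref{Th2}, the sequence $w_j$ produces a bounded but nontrivial contribution from a small window around the plateau edge $s=1/j$; adding any $h(1/s)\to\infty$ should boost that window by a factor $e^{h(j)}$ and overwhelm the delicate balance that kept $J_1(w_j)$ bounded.

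On the plateau $s\in[1/j,1]$, $w_j^N(s)/s^{N-1}=1/(js)^{N-1}$. Restricting the integral to the thin window $[1/j,\,c_j/j]$ with $c_j:=1+1/\log j$ and substituting $r=js$ yields
\[
J_{1,h}(w_j)\;\geq\;\frac{1}{j}\int_1^{c_j}\exp\!\bigl(\Phi_j(r)\bigr)\,dr,\qquad \Phi_j(r):=\frac{\log(ej/r)+\log\log(ej/r)+h(j/r)}{r^{N-1}}.
\]
Both the numerator (by monotonicity of $\log$ and of $h$) and $1/r^{N-1}$ decrease in $r$, so $\Phi_j\geq\Phi_j(c_j)$ on the window. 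Since $c_j\to 1$ at rate $1/\log j$, a Taylor expansion gives $\log(ej/c_j)=\log j+1+o(1)$, $\log\log(ej/c_j)=\log\log j+o(1)$, and $1/c_j^{N-1}=1-(N-1)/\log j+o(1/\log j)$. Because $j/c_j\to\infty$ we have $h(j/c_j)\to\infty$; because the hypothesis gives $h(t)=o(\log\log t)$ we also have $h(j/c_j)/\log j=o(1)$. Multiplying out,
\[
\Phi_j(c_j)\;\geq\;\log j+\log\log j+h(j/c_j)+(2-N)+o(1).
\]
The window has length $c_j-1=1/\log j$, so exponentiating and integrating,
\[
J_{1,h}(w_j)\;\geq\;\frac{1}{j}\cdot\frac{1}{\log j}\cdot j\log j\cdot e^{h(j/c_j)+(2-N)+o(1)}\;=\;e^{h(j/c_j)+(2-N)+o(1)}\longrightarrow\infty.
\]

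The delicate point is the choice of window width. A width of order $1/\log j$ is precisely what pairs with the leading contribution $\exp(\log j+\log\log j)=j\log j$ to cancel the $1/j$ from the Jacobian; any faster shrinkage of the window would lose the boost from the log-log, while any slower shrinkage would let the multiplicative perturbation $1/c_j^{N-1}=1-(N-1)/\log j+o(1/\log j)$ subtract more than a bounded constant from $\Phi_j$. The hypothesis $h(t)/\log\log(et)\to 0$ enters at exactly one place: it ensures the loss $-(N-1)h(j/c_j)/\log j$ is $o(1)$, so the divergent gain $h(j/c_j)$ is not consumed by the same balancing mechanism that keeps $J_1(w_j)$ bounded. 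The example $h(\frac{1}{s})=\log\log\log(e^e/s)$ satisfies the assumptions and so is covered.
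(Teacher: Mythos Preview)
Your argument is correct. Both you and the paper test $J_{1,h}$ on the infinitesimal Moser sequence $w_j$ and show divergence, but you work on different pieces of $w_j$. The paper restricts to the \emph{linear} portion $[\tfrac1{2j},\tfrac1j]$, bounds the coefficient below by its value at $s=\tfrac1j$, and then integrates $e^{At}$ exactly with $A=\log(ej)+\log\log(ej)+h(j)$; the cancellation of $j\log(ej)$ between numerator and denominator is what produces the clean limit $e\,e^{h(j)}(1+o(1))$. You instead restrict to a thin window $[\tfrac1j,\tfrac{c_j}{j}]$ of the \emph{plateau} region with the tuned width $c_j-1=1/\log j$, bound the exponent from below at the right endpoint, and balance the window length against the factor $j\log j$ coming from $e^{\Phi_j(c_j)}$. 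The paper's route is somewhat slicker in that it avoids the delicate choice of window width and the Taylor expansion of $1/c_j^{N-1}$; your route, on the other hand, makes explicit why the result is dimension-robust (the only $N$-dependence is the harmless constant $2-N$ in the exponent) and gives a transparent picture of the ``asymptotic invariance'' mechanism you describe in your final paragraph. Both proofs invoke the hypothesis $h(t)=o(\log\log et)$ at the same place, namely to ensure that $h$ is negligible compared to $\log j$ so that it does not disturb the critical balance.
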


\section{The proof of Theorem \ref{Th1} }
\setcounter{equation}{0}
\noindent
First, we give an estimate for the Bliss embedding constants $C_{N,k}$ in \eqref{CBliss}.
\begin{proposition}\label{pro1} For any fixed $N>1$,
\begin{equation}\nonumber
k\,C_{N,k}\to \frac{1}{N-1}\,e^{1+\frac{1}{2}+\cdot\cdot\cdot \frac{1}{N-1}}=:C_N\ ,\quad\text{as}~k\to\infty
\end{equation}
\end{proposition}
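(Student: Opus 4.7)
The plan is to reduce the limit to a first-order expansion of a $\Gamma$-function ratio near a simple point, using the standard trick that $a_\epsilon^{1/\epsilon}\to e^{L}$ when $\log a_\epsilon = L\epsilon + o(\epsilon)$. To set this up, I would introduce $\epsilon = \tfrac{1}{k-1}$, so that $\epsilon \to 0^+$ as $k \to \infty$, and observe that
\[
\tfrac{Nk}{k-1} = N + N\epsilon, \qquad \tfrac{Nk-1}{k-1} = N + (N-1)\epsilon, \qquad \tfrac{1}{k-1} = \epsilon.
\]
Then the bracketed factor in $C_{N,k}$ becomes
\[
F(\epsilon) := \frac{(1/\epsilon)\,\Gamma(N+N\epsilon)}{\Gamma(\epsilon)\,\Gamma(N+(N-1)\epsilon)} = \frac{1}{\Gamma(1+\epsilon)}\cdot\frac{\Gamma(N+N\epsilon)}{\Gamma(N+(N-1)\epsilon)},
\]
where I have used the identity $\epsilon\,\Gamma(\epsilon) = \Gamma(1+\epsilon)$. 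Since $\Gamma(1+\epsilon)\to 1$ and both $\Gamma$-factors in the ratio tend to $\Gamma(N)$, we have $F(\epsilon)\to 1$, and the remaining task is to compute the rate.

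Next I would take logarithms and Taylor-expand to first order in $\epsilon$. With $\psi = \Gamma'/\Gamma$ the digamma function,
\[
\log F(\epsilon) = -\log\Gamma(1+\epsilon) + \log\Gamma(N+N\epsilon) - \log\Gamma(N+(N-1)\epsilon),
\]
so differentiating at $\epsilon = 0$ gives
\[
\tfrac{d}{d\epsilon}\log F(\epsilon)\Big|_{\epsilon=0} = -\psi(1) + N\psi(N) - (N-1)\psi(N) = -\psi(1) + \psi(N).
\]
Using the classical values $\psi(1) = -\gamma$ and $\psi(N) = -\gamma + H_{N-1}$, where $H_{N-1}=1+\tfrac12+\cdots+\tfrac{1}{N-1}$, this derivative equals exactly $H_{N-1}$. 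Hence $\log F(\epsilon) = H_{N-1}\,\epsilon + o(\epsilon)$.

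Finally, since $k-1 = 1/\epsilon$, I conclude
\[
F(\epsilon)^{k-1} = \exp\!\left(\tfrac{1}{\epsilon}\log F(\epsilon)\right) \longrightarrow e^{H_{N-1}} = e^{1+\frac12+\cdots+\frac{1}{N-1}},
\]
and multiplying by the front factor $\tfrac{1}{N-1}$ yields $kC_{N,k}\to C_N$ as claimed. The computation is essentially routine once the substitution $\epsilon = 1/(k-1)$ is made; the only mildly delicate point is recognizing the cancellation of $\gamma$ in the $\psi(N)-\psi(1)$ evaluation, which is what produces the clean harmonic-sum exponent. No step is a real obstacle, but care is required to make sure the $o(\epsilon)$ remainder in $\log F$ is genuinely $o(\epsilon)$ and not merely $O(\epsilon)$, which follows from smoothness of $\Gamma$ near the positive integer $N$.
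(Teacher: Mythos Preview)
Your argument is correct. Both you and the paper make the same substitution $m=k-1$ (your $1/\epsilon$) and reduce to showing that the bracketed quantity, raised to the $(k-1)$-st power, tends to $e^{H_{N-1}}$; the difference is in how that limit is computed. The paper applies the functional equation $\Gamma(N+x)=(N-1+x)\cdots(1+x)\Gamma(1+x)$ to both $\Gamma(N+N/m)$ and $\Gamma(N+(N-1)/m)$, which factors the bracket into an explicit product $\prod_{j=1}^{N-1}\bigl(1+\tfrac{1}{jm+(N-1)}\bigr)$ times a residual $\Gamma$-ratio near $1$; each factor of the product contributes $e^{1/j}$ in the limit, while the residual is shown directly (via the Taylor expansion of $\Gamma$ at $1$) to contribute $1$. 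You instead differentiate $\log F$ once and invoke the digamma identity $\psi(N)-\psi(1)=H_{N-1}$, which collapses the computation to a single line. Your route is shorter and cleaner if one is willing to quote the digamma recurrence; the paper's route is more elementary and makes the provenance of each term $e^{1/j}$ visible without special-function identities. Your remark that smoothness of $\log\Gamma$ near positive integers upgrades the remainder to $O(\epsilon^2)$ (hence $o(\epsilon)$) is exactly the justification needed, and is implicit in the paper's hands-on expansion as well.
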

\begin{proof}
Let $m=k-1$, by \eqref{CBliss}
\begin{equation*}
%\aligned
\begin{array}{lll}
&&(N-1)kC_{N,k}\vspace{0.3cm}\\
&=&\ds \bigg[\frac{m\Gamma(N+\frac{N}{m})}{\Gamma(\frac{1}{m})\Gamma(N+\frac{N-1}{m})}\bigg]^{m}
\vspace{0.3cm}\\

&=&\ds \bigg[\frac{(N-1+\frac{N}{m})(N-2+\frac{N}{m})\cdot\cdot\cdot(1+\frac{N}{m})\Gamma(1+\frac{N}{m})}{\Gamma(\frac{1}{m}+1)(N-1+\frac{N-1}{m})(N-2+\frac{N-1}{m})\cdot\cdot\cdot(1+\frac{N-1}{m})\Gamma(1+\frac{N-1}{m})}\bigg]^{m}\vspace{0.3cm} \\
&=&\ds \bigg[\Big(1+\frac{1}{(N-1)m+(N-1)}\Big)\Big(1+\frac{1}{(N-2)m+(N-1)}\Big)\cdot\cdot\cdot\Big(1+\frac{1}{m+(N-1)}\Big)\bigg]^{m}\vspace{0.3cm} \\
&&\ds \cdot\bigg[\frac{\Gamma(1+\frac{N}{m})}{\Gamma(1+\frac{1}{m})\Gamma(1+\frac{N-1}{m})} \bigg]^{m} \ ,
%\endaligned
\end{array}
\end{equation*}
since
\begin{equation*}
\begin{array}{ll}
&\ds \left[\Big(1+\frac{1}{(N-1)m+(N-1)}\Big)\Big(1+\frac{1}{(N-2)m+(N-1)}\Big)\cdot\cdot\cdot\Big(1+\frac{1}{m+(N-1)}\Big)\right]^{m}\vspace{0.5cm} \\
& \longrightarrow\quad \ds e^{\frac{1}{N-1}}e^{\frac{1}{N-2}}\cdot\cdot\cdot e^{\frac{1}{N-(N-1)}},\quad\text{as}~m\to\infty\ ,
\end{array}
\end{equation*}
and
\begin{equation*}
\begin{array}{ll}
&\ds\bigg[\frac{\Gamma(1+\frac{N}{m})}{\Gamma(1+\frac{1}{m})\Gamma(1+\frac{N-1}{m})} \bigg]^{m}
\vspace{0.3cm}\\ =&\ds\bigg[\frac{\Gamma(1)+\Gamma'(1)\frac{N}{m}+O(\frac{1}{m^2})}{\left(\Gamma(1)+\Gamma'(1)\frac{1}{m}+O(\frac{1}{m^2})\right)\left(\Gamma(1)+\Gamma'(1)\frac{N-1}{m}+O(\frac{1}{m^2})\right)} \bigg]^{m}\vspace{0.3cm}\\
=&\ds\bigg[\Big(1+\Gamma'(1)\frac{N}{m}+O(\frac{1}{m^2})\Big)\Big(1-\Gamma'(1)\frac{1}{m}+O(\frac{1}{m^2})\Big)\Big(1-\Gamma'(1)\frac{N-1}{m}+O(\frac{1}{m^2})\Big) \bigg]^{m}\vspace{0.3cm}\\
=&\ds\bigg[\Big(1+\Gamma'(1)\frac{N}{m}+O(\frac{1}{m^2})\Big)\Big(1-\Gamma'(1)\frac{N}{m}+O(\frac{1}{m^2})\Big)\bigg]^{m}\vspace{0.3cm}\\
=&\ds\bigg[1+O(\frac{1}{m^2})\bigg]^{m}\to1,\quad\text{as}~m\to\infty,
\end{array}
\end{equation*}
we get the conclusion.
\end{proof}

\begin{remark}
    In the ground-breaking result of L. Carleson and S.-Y. A. Chang \cite{CSYAC}, the number
    $1+e^{1+\frac{1}{2}+\frac{1}{3}+\cdot\cdot\cdot+\frac{1}{N-1}}$ is an energy threshold that plays a crucial role in proving the existence of the extremal function of the Moser inequality $$\sup\limits_{w\in E_N}\int_0^{+\infty}
e^{\beta|w|^{\frac{N}{N-1}}-t}dt.$$
The connection between the constant $C_N$ defined in Proposition \ref{pro1} and the number
    $1+e^{1+\frac{1}{2}+\frac{1}{3}+\cdot\cdot\cdot+\frac{1}{N-1}}$ seems to imply an correlation between the Limiting-Bliss inequality and the Moser inequality.
\end{remark}
Trudinger \cite{Trudinger} (see also Yudovich \cite{Yudovich}, Pohozaev \cite{Pohozaev}) proved his inequality by using Taylor expansion and then applying the Sobolev inequalities to each term. We follow this idea, using the Bliss inequalities in place of the Sobolev inequalities, to prove the following Limiting Bliss inequality:
\begin{proposition}\label{0918th1}
The supremum
\begin{equation}\label{supremum}
\sup\limits_{v\in E_N}\int_0^1 e^{\beta\left(\log\frac{e}{s}\right)\frac{v^N(s)}{s^{N-1}}}ds
\end{equation}
 is finite for $\beta < 1$.
\end{proposition}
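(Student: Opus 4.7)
The plan is to follow Trudinger's original strategy verbatim, with the Bliss inequality \eqref{Bliss} replacing the Sobolev inequality as the per-term estimate. First, I would expand the exponential into its Taylor series and interchange sum and integral (legitimate by monotone convergence, since every term is nonnegative):
\[
\int_0^1 e^{\beta\log(e/s)\,v^N(s)/s^{N-1}}\,ds \;=\; \sum_{k=0}^{\infty}\frac{\beta^k}{k!}\int_0^1 \left(\log\tfrac{e}{s}\right)^k\frac{v^{Nk}(s)}{s^{(N-1)k}}\,ds.
\]
Bliss requires the power of $s$ in the denominator to be $(N-1)k+1$, so I would rewrite the $k$-th integrand by extracting one factor of $s$:
\[
\int_0^1 \left(\log\tfrac{e}{s}\right)^k\frac{v^{Nk}(s)}{s^{(N-1)k}}\,ds \;=\; \int_0^1 s\bigl(\log\tfrac{e}{s}\bigr)^k \cdot \frac{v^{Nk}(s)}{s^{(N-1)k+1}}\,ds.
\]

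A short calculus computation (differentiating $g_k(s):=s(\log(e/s))^k$ and setting the result to zero at $s=e^{1-k}$) yields the sharp bound $\sup_{s\in(0,1]}g_k(s)=e(k/e)^k$. Pulling this supremum out of the integral and applying the Bliss inequality \eqref{Bliss} for the index $k$, using $\int_0^1|v'|^N=1$ for $v\in E_N$, then gives
\[
\int_0^1 \left(\log\tfrac{e}{s}\right)^k\frac{v^{Nk}(s)}{s^{(N-1)k}}\,ds \;\le\; e\left(\frac{k}{e}\right)^k C_{N,k}.
\]
Summing back up, the whole integral is bounded uniformly in $v\in E_N$ by
\[
e\sum_{k=0}^{\infty}\frac{\beta^k}{k!}\left(\frac{k}{e}\right)^k C_{N,k}.
\]
To see that this series converges for $\beta<1$, I would invoke Stirling's inequality $k!\ge\sqrt{2\pi k}\,(k/e)^k$, which cancels the potentially explosive $(k/e)^k$ and leaves a harmless $1/\sqrt{2\pi k}$, together with Proposition \ref{pro1}, which yields $C_{N,k}=O(1/k)$ as $k\to\infty$. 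The series is then dominated by a constant multiple of $\sum_{k\ge 1}\beta^k/k^{3/2}$, which converges for every $\beta<1$, producing the desired uniform constant $C(N,\beta)$.

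The main delicate point is that the sharp value $\sup_{(0,1]} s(\log(e/s))^k = e(k/e)^k$ is exactly what is needed: it grows at the precise rate $\sim k!/\sqrt{k}$, which balances the $1/k!$ Taylor coefficient; any cruder estimate of $g_k$ (for example, splitting the interval and bounding the logarithm by a constant on $[1/2,1]$) would ruin the factorial cancellation and give a divergent series. The borderline case $\beta=1$ is not addressed by this argument, which is consistent with the statement of the proposition; it will be handled separately in Theorem \ref{Th1} through the infinitesimal Moser sequence analysis.
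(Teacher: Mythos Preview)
Your proof is correct and follows essentially the same route as the paper: Taylor expansion, then the sharp pointwise bound $\sup_{s\in(0,1]} s(\log(e/s))^k = e(k/e)^k$ (equivalently written in the paper as $\max s^{1/k}\log(e/s) = k/e^{1-1/k}$), then the Bliss inequality termwise, arriving at the identical majorizing series $e\sum_{k\ge 1}\frac{k^k}{k!}\big(\tfrac{\beta}{e}\big)^k C_{N,k}$. The only cosmetic difference is that the paper finishes with the ratio test (using $kC_{N,k}\to C_N$ from Proposition~\ref{pro1}) rather than Stirling, and that the $\beta=1$ case is actually deferred to Theorem~\ref{Th2} rather than to an infinitesimal Moser sequence argument.
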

\begin{proof}
 It is sufficient to consider the case $\beta\in(0,1)$.
According to Taylor expansion, we have
\begin{equation}\label{20240924-e1}
\aligned
\int_0^1 e^{\beta\left(\log\frac{e}{s}\right)\frac{v^N(s)}{s^{N-1}}}ds
=& 1+\int_0^1\sum_{k\geq1}\frac{1}{k!} \left[\beta\left(\log\frac{e}{s}\right)\frac{v^N(s)}{s^{N-1}}\right]^kds\\
=& 1+\int_0^1\sum_{k\geq1}\frac{1}{k!} \left[\beta\left(s^{\frac{1}{k}}\log\frac{e}{s}\right)\frac{v^N(s)}{s^{N-1+\frac{1}{k}}}\right]^kds.
\endaligned
\end{equation}
Note that
\begin{equation}\label{20240705-e1}
\max_{s>0}\big\{s^{\frac{1}{k}}\log\frac{e}{s}\big\}=\frac{k}{e^{1-\frac{1}{k}}}.
\end{equation}
Hence we can estimate the  sum in \eqref{20240924-e1} by using \eqref{20240705-e1} and the Bliss inequalities \eqref{Bliss}
\begin{equation}\nonumber
\aligned
 &\int_0^1\sum_{k\geq1}\frac{1}{k!} \left[\beta\left(s^{\frac{1}{k}}\log\frac{e}{s}\right)\frac{v^N(s)}{s^{N-1+\frac{1}{k}}}\right]^kds \\
 \leq& \ \sum_{k\geq1}\frac{1}{k!} \left(\beta k\right)^ke^{1-k}\int_0^1\left[\frac{v^N(s)}{s^{N-1+\frac{1}{k}}}\right]^kds\\
 \leq&\  e\sum_{k\geq1}\frac{k^{k-1}}{k!} \left(\frac{\beta}{e}\right)^{k}kC_{N,k}\left[\int_0^1|v'|^Nds\right]^k\\
 =&\ e\sum_{k\geq1}\frac{k^{k-1}}{k!} \left(\frac{\beta}{e}\right)^{k}(C_N+o(1)).
\endaligned
\end{equation}
Applying the ratio test, we get
\begin{equation}\nonumber
\aligned
\frac{\frac{(k+1)^{k}}{(k+1)!} \left(\frac{\beta}{e}\right)^{k+1}}
{\frac{k^{k-1}}{k!} \left(\frac{\beta}{e}\right)^{k}}=\frac{\beta}{e}\frac{(k+1)^{k-1}}{k^{k-1}}=\frac{\beta}{e}\left(1+\frac{1}{k}\right)^{k-1}\to\beta.
\endaligned
\end{equation}
Hence, if $\beta< 1$, then the series converges. This proves the proposition.
\end{proof}

Next, we
prove that the supremum \eqref{supremum} is infinite for $\beta>1$ by using the infinitesimal Moser sequence
\eqref{moser}.

\begin{proposition}\label{0918th2}
The supremum \eqref{supremum} is infinite for $\beta>1$.
\end{proposition}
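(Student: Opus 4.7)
The plan is to test $I_\beta$ directly on the infinitesimal Moser sequence $\{w_j\}$ from \eqref{moser}. On the plateau $[1/j,1]$ the function $w_j$ equals the constant $j^{-(N-1)/N}$, so
$$\frac{w_j^N(s)}{s^{N-1}} = \frac{1}{(js)^{N-1}}, \qquad s \in [1/j,1].$$
At the left endpoint $s = 1/j$ this ratio equals $1$ while $\log(e/s) = 1+\log j$, so the pointwise value of the integrand is of size $j^\beta$ there. The task is then to integrate this large value over a window long enough that the integral still diverges.

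Concretely, I would fix a small parameter $\epsilon > 0$ and restrict the integral to the subinterval $[1/j,(1+\epsilon)/j]$. On this window
$$\frac{1}{(js)^{N-1}} \ge \frac{1}{(1+\epsilon)^{N-1}}, \qquad \log\frac{e}{s} \ge 1 - \log(1+\epsilon) + \log j,$$
so the exponent appearing in $I_\beta$ is uniformly at least $\beta(1+\epsilon)^{-(N-1)}\log j + c(\epsilon)$ for some constant $c(\epsilon)$ independent of $j$. Integrating over the window of length $\epsilon/j$ yields
$$I_\beta(w_j) \;\ge\; C(\epsilon)\,\frac{\epsilon}{j}\, j^{\beta/(1+\epsilon)^{N-1}} \;=\; C(\epsilon)\,\epsilon \, j^{\beta/(1+\epsilon)^{N-1}-1}.$$

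The decisive step is the choice of $\epsilon$. Since $\beta > 1$, one can pick $\epsilon$ small enough that $(1+\epsilon)^{N-1} < \beta$; then the exponent $\beta/(1+\epsilon)^{N-1} - 1$ is strictly positive and $I_\beta(w_j) \to \infty$ as $j \to \infty$, which is the claim. The main obstacle is balancing the length of the window against the pointwise size of the integrand: widening to e.g.\ $[1/j,2/j]$ would only produce divergence for $\beta > 2^{N-1}$, while shrinking $\epsilon$ costs length, and the sharp threshold $\beta > 1$ is extracted precisely by letting $\epsilon \to 0^+$ after the estimate. Every other step is an elementary calculation.
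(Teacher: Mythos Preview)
Your proof is correct. Both you and the paper test $I_\beta$ on the infinitesimal Moser sequence $w_j$, but you work on different pieces of it. The paper integrates over the \emph{ramp} $[0,1/j]$, where $w_j(s)=j^{1/N}s$ gives $\frac{w_j^N(s)}{s^{N-1}}=js$; bounding $\log\frac{e}{s}\ge \log(ej)$ and substituting $t=js$ yields the explicit primitive
\[
\int_0^{1/j} e^{\beta js\log(ej)}\,ds=\frac{(ej)^\beta-1}{\beta j\log(ej)}\sim \frac{e^\beta}{\beta}\,\frac{j^{\beta-1}}{\log(ej)}\to\infty,
\]
so the sharp threshold $\beta>1$ falls out directly without any auxiliary parameter. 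You instead work on the \emph{plateau} near $s=1/j$, where $\frac{w_j^N(s)}{s^{N-1}}=(js)^{-(N-1)}$; this forces the window $[1/j,(1+\epsilon)/j]$ and the a posteriori choice $(1+\epsilon)^{N-1}<\beta$, which is exactly the balancing you describe. Both routes are equally elementary; the paper's has the minor advantage that the exponent $js$ is linear in $s$, allowing exact integration and avoiding the $\epsilon$-limit altogether.
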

\begin{proof}
We use the infinitesimal Moser sequence \eqref{moser} to demonstrate the sharpness of the Limiting-Bliss inequality \eqref{supremum}
with respect to $\beta=1$. Indeed, setting $\beta=1+\delta$, for $\delta>0$ and $N\ge2$ we have
\begin{equation}\nonumber
\aligned
\int_0^1 e^{\beta\left(\log\frac{e}{s}\right)\frac{w_j^N(s)}{s^{N-1}}}ds
\geq & \int_0^{\frac{1}{j}} e^{(1+\delta) js\log(\frac{e}{s})}ds
\vspace{0.3cm}\\
\geq &\int_0^{\frac{1}{j}} e^{(1+\delta) js\log(je)} ds
\vspace{0.3cm}\\
=&\int_0^{1} e^{(1+\delta) t\log(je)} \frac{1}{j}dt
\vspace{0.3cm}\\
=&\frac{1}{j}\frac{1}{(1+\delta) \log(je)}e^{(1+\delta) \log(je)}-\frac{1}{j}\frac{1}{(1+\delta) \log(je)}
\vspace{0.3cm}\\
=&\frac{e^{(1+\delta) }}{(1+\delta) }\frac{j^{\delta}}{\log(je)}-\frac{1}{j}\frac{1}{(1+\delta) \log(je)}
\vspace{0.3cm}\\
\to&+\infty,\quad\text{as}~j\to+\infty.
\endaligned
\end{equation} \end{proof}
\par
The case $\beta = 1$ cannot be proved with the "Taylor series method". But we observe that
$$
\sup\limits_{v\in E_N}I_1(v)=\sup\limits_{v\in E_N}\int_0^1 e^{\left(\log\frac{e}{s}\right)\frac{v^N(s)}{s^{N-1}}}ds \le
\sup\limits_{v\in E_N}\int_0^1 e^{\left(\log\frac{e}{s}+\gamma\log\log\frac{e}{s}\right)\frac{v^N(s)}{s^{N-1}}}ds< \infty
$$
follows from  Theorem \ref{Th2}, for $0 \le\gamma \le 1$; we therefore omit the proof here.

\par \bigskip
To complete the proof of Theorem \ref{Th1}, all that remains to prove is
\begin{proposition}
    Let $\{w_j\}$ be the infinitesimal Moser sequence defined in
\eqref{moser}.
Then
\begin{equation}\nonumber
    I_\beta (w_j)\to
    \left\{
    \begin{array}{lcl}
         +\infty,\ \ &\hbox{if}\ \beta>1,\\
         1=I_1(0),\ \ &\hbox{if}\ \beta=1,
    \end{array}
    \right.
\end{equation}
and (for subsequence)
$$I_\beta(u_n) \to I_\beta(u)\ , \ \hbox{ for } \  u_n \rightharpoonup u \ \hbox{ in } \ E_N,\ \hbox{ if }  \beta < 1$$
\end{proposition}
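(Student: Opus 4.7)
The proof decomposes according to the value of $\beta$. The divergence $I_\beta(w_j)\to+\infty$ for $\beta>1$ is already established in Proposition \ref{0918th2}, so it suffices to identify the limit $I_1(w_j)\to 1$ at the critical threshold and to prove the weak continuity of $I_\beta$ on $E_N$ for $\beta<1$.

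For the weak continuity, I plan to exploit the pointwise bound obtained from H\"older's inequality: for every $v\in E_N$,
\begin{equation}
v(s)=\int_0^s v'(t)\,dt\le s^{1-1/N}\left(\int_0^1|v'|^N\right)^{1/N}=s^{1-1/N},\nonumber
\end{equation}
so that $v^N(s)/s^{N-1}\le 1$ and the integrand is uniformly dominated by $(e/s)^\beta$, which lies in $L^1(0,1)$ precisely when $\beta<1$ with $\int_0^1(e/s)^\beta\,ds=e^\beta/(1-\beta)$. Since $W^{1,N}(0,1)\hookrightarrow C([0,1])$ is compact for $N\ge 2$, any weakly convergent sequence $u_n\rightharpoonup u$ in $E_N$ converges uniformly, hence $u_n^N(s)/s^{N-1}\to u^N(s)/s^{N-1}$ pointwise on $(0,1]$. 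Dominated convergence then yields $I_\beta(u_n)\to I_\beta(u)$ along the full sequence.

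For the critical case $\beta=1$, I plan to compute $I_1(w_j)$ directly, using the two branches of $w_j$ and the substitution $u=js$ on each piece:
\begin{equation}
I_1(w_j)=\frac{1}{j}\int_0^1 e^{u(1+\log j-\log u)}\,du+\frac{1}{j}\int_1^j e^{(1+\log j-\log u)/u^{N-1}}\,du=:A_j+B_j.\nonumber
\end{equation}
Using $-u\log u\le 1/e$ on $[0,1]$ together with $\int_0^1 e^{u(1+\log j)}\,du=(ej-1)/(1+\log j)$ gives $A_j=O(1/\log j)\to 0$. For $B_j$, I would split at $u=(\log j)^2$: on $[(\log j)^2,j]$ the exponent is $O((\log j)^{-(2N-3)})=o(1)$, so this piece contributes $(j-(\log j)^2)/j\cdot(1+o(1))\to 1$; on $[1,(\log j)^2]$ the substitution $v=u^{N-1}$ (using $v^{-(N-2)/(N-1)}\le 1$) reduces matters to estimating $\int_1^{T(j)} e^{(1+\log j)/v}\,dv$, which via $w=(1+\log j)/v$ and the standard asymptotic $\int_A^B e^w/w^2\,dw\sim e^B/B^2$ is $\sim ej/(1+\log j)$, yielding contribution $O(1/\log j)$. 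Summing, $A_j+B_j\to 1=I_1(0)$.

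The main obstacle is this last estimate on $[1,(\log j)^2]$ in the analysis of $B_j$: near $u=1$ the integrand reaches $ej$, and the delicate point is that this peak is narrow enough (effective width $\sim 1/\log j$ in $u$) that, after the $1/j$ prefactor, the total contribution is only $O(1/\log j)$ and therefore vanishes in the limit.
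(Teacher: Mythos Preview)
Your proposal is correct. The treatment of $\beta<1$ is essentially the same as the paper's: both use the pointwise bound $v^N(s)/s^{N-1}\le 1$ to dominate the integrand by $(e/s)^\beta\in L^1(0,1)$ and then apply dominated convergence; you pass through the compact embedding $W^{1,N}(0,1)\hookrightarrow C([0,1])$ to get uniform (hence pointwise) convergence, while the paper extracts a.e.\ convergence from $L^p$-compactness, but this is a cosmetic difference.

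For $\beta=1$ the two arguments genuinely differ. The paper introduces a parameter $\epsilon>0$ and splits $[0,1]$ into four intervals $[0,\tfrac{1}{(1+\epsilon)j}]$, $[\tfrac{1}{(1+\epsilon)j},\tfrac{1}{j}]$, $[\tfrac{1}{j},\tfrac{1+\epsilon}{j}]$, $[\tfrac{1+\epsilon}{j},1]$: on the outer pieces the exponent $w_j^N/s^{N-1}$ is bounded by $\tfrac{1}{1+\epsilon}$ (resp.\ $\tfrac{1}{(1+\epsilon)^{N-1}}$), giving a uniformly integrable majorant, while the two inner pieces near $s=1/j$ are bounded crudely by $\int e/s\,ds=e\log(1+\epsilon)$; the upper limit $1+2e\log(1+\epsilon)$ is then sent to $1$ by letting $\epsilon\to 0$. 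Your approach avoids the auxiliary $\epsilon$ by making the single change of variables $u=js$ and computing quantitative asymptotics, obtaining $A_j=O(1/\log j)$ and the near-peak part of $B_j$ also $O(1/\log j)$. This is sharper (it gives a rate) at the cost of the explicit asymptotic computations. One small point to clean up: the asymptotic $\int_A^B e^w w^{-2}\,dw\sim e^B/B^2$ requires $A$ bounded away from $0$, whereas in your substitution $A=(1+\log j)/(\log j)^{2(N-1)}\to 0$; the extra contribution $\int_A^1 e^w w^{-2}\,dw=O(1/A)=O((\log j)^{2N-3})$ must be recorded, but after multiplying by $(1+\log j)/j$ it still vanishes, so your conclusion is unaffected.
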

\begin{proof}
{\bf Case $\beta>1$.}
Using \eqref{moser}
\begin{equation*}
\aligned
I_\beta(w_j)>\int_0^{\frac{1}{j}}\big(\,\frac{e}{s}\,\big)^{\beta js}ds
\geq\int_0^{\frac{1}{j}}\left(ej\right)^{\beta js}ds
=\frac{(ej)^\beta}{\beta j \log(ej)}-\frac{1}{\beta j \log(ej)}\to+\infty.
\endaligned
\end{equation*}
{\bf Case $\beta =1$. }
\par \smallskip \noindent
{\it Upper estimate.} \
We write for any fixed $\epsilon>0$, using \eqref{moser}
\begin{equation}\nonumber
\aligned
I_1(w_j)
=&\int_0^{\frac{1}{(1+\epsilon)j}}\left(\frac{e}{s}\right)^{ js}ds+\int_{\frac{1}{(1+\epsilon)j}}^{\frac{1}{j}}\left(\frac{e}{s}\right)^{ js}ds+\int_{\frac{1}{j}}^{\frac{1+\epsilon}{j}}\left(\frac{e}{s}\right)^{\frac{1}{(js)^{N-1}}}ds
+\int_{\frac{1+\epsilon}{j}}^{1}\left(\frac{e}{s}\right)^{\frac{1}{(js)^{N-1}}}ds\\
:=& A_j+B_j+C_j+D_j,
\endaligned
\end{equation}
and estimate
\begin{equation}\nonumber
\aligned
A_j\le& \int_0^{\frac{1}{(1+\epsilon)j}}\left(\frac{e}{s}\right)^{\frac{1}{1+\epsilon}}ds=\frac{e^{\frac{1}{1+\epsilon}}}{1-\frac{1}{1+\epsilon}}\Big(\frac{1}{(1+\epsilon)j}\Big)^{1-\frac{1}{1+\epsilon}}\to 0, \\
B_j\leq& \int^{\frac{1}{j}}_{\frac{1}{(1+\epsilon)j}}\frac{e}{s}\, ds=e\log{(1+\epsilon)},
\\
C_j\leq& \int_{\frac{1}{j}}^{\frac{1+\epsilon}{j}}\frac{e}{s}\, ds=e\log{(1+\epsilon)}.
\endaligned
\end{equation}
Next, we show that
\begin{equation}\nonumber
    D_j=\displaystyle \int_{\frac{1+\epsilon}{j}}^{1} \Big(\frac{e}{s}\Big)^{\frac{1}{(js)^{N-1}}}\, ds\to 1 \ \hbox{as}\ j\to \infty.
\end{equation}
Indeed, we write
\begin{equation}\nonumber
    \displaystyle \int_{\frac{1+\epsilon}{j}}^{1} \Big(\frac{e}{s}\Big)^{\frac{1}{(js)^{N-1}}}\, ds=\int_0^1\chi_{[\frac{1+\epsilon}{j}, 1]}\Big(\frac{e}{s}\Big)^{\frac{1}{(js)^{N-1}}}\, ds
\end{equation}
and note that on $[\frac{1+\epsilon}{j}, 1]$ we have $\frac{1}{(js)^{N-1}}\le \frac{1}{(1+\epsilon)^{N-1}}$ and
\begin{equation}\label{eq:57}\nonumber
\chi_{[\frac{1+\epsilon}{j}, 1]}\Big(\frac{e}{s}\Big)^{\frac{1}{(js)^{N-1}}}\le \chi_{[\frac{1+\epsilon}{j}, 1]}\Big(\frac{e}{s}\Big)^{\frac{1}{(1+\epsilon)^{N-1}}}\in L^1(0,1).
\end{equation}
Furthermore,
\begin{equation}\nonumber
\chi_{[\frac{1+\epsilon}{j}, 1]}\Big(\frac{e}{s}\Big)^{\frac{1}{(js)^{N-1}}}\to 1 \ \hbox{as}\ j \to \infty
\end{equation}
pointwise in $s$. Hence, by the Dominated Lebesgue theorem we conclude $D_j\to1$. Combining the estimates of $A_j, B_j, C_j$ and $D_j$, for any $\epsilon>0$ we have
$$I_1(w_j)\leq 1+2e\log(1+\epsilon)+o(1),\quad\text{as}~j\to\infty.$$
Then by the arbitrariness of $\epsilon$, we get  $\limsup_{j\to\infty} I_1(w_j)\leq1.$

\par \smallskip \noindent
{\it Lower estimate.} \ Since $\beta \left(\log\frac{e}{s}\right)\frac{w_j^N(s)}{s^{N-1}} \ge 0,\  s\in (0,1)$, we have
$$I_\beta(w_j) = \int_0^1 e^{\beta\left(\log\frac{e}{s}\right)\frac{w_j^N(s)}{s^{N-1}}}ds \ge 1  \ , \ \forall \ j.
$$
Combining Upper estimate and Lower estimate, we get $I_1(w_j)\to1.$
\par \medskip \noindent
{\bf Case $\beta<1$.}\ For $u_n \rightharpoonup u$ in $E_N$ we have $u_n \to u$ in any $L^p(0,1)$, and hence $u_n(x) \to u(x), a.e.$ (subsequence). Furthermore, from inequality \eqref{basic} below we have
$$u(s) \le s^{\frac {N-1}N} \ , \ s \in (0,1)$$
and hence
$$e^{\beta (\log \frac es ) \frac {u(s)^N}{s^{N-1}} }\le e^{\beta (\log \frac es)} = \big(\frac es\big)^\beta \in L^1(0,1)  ,  \ \hbox{ for } \ \beta < 1\ .
$$
Then the claim follows by Lebesgue dominated convergence.
\end{proof}

\section{Approximate Moser functions}
\noindent
For any $w\in E_N$ and $s>0$, we see from
\begin{equation}\label{basic}
w(s)=\int_0^{s}w'(t)dt\leq s^{\frac{N-1}{N}}\left(\int_0^{s}|w'(t)|^Ndt\right)^{\frac{1}{N}}
\end{equation}
that
\begin{equation}\label{20240709-e1}
	\frac{w^N(s)}{s^{N-1}}\leq1.
\end{equation}
Inspired by \cite{Moser}, we prove that if for some $w \in E_N$ the value
$$\max\limits_{s\in[0,1]}\frac{w^N(s)}{s^{N-1}} \quad \hbox{ is close to }\  1 \ ,$$
 then $w\in E_N$ is close to one of the infinitesimal Moser functions \eqref{moser}.
\begin{lemma}\label{Key} For any $w\in E_N$, let
	\begin{equation}\label{20240706-e1}
		1-\delta=\max\limits_{s\in[0,1]}\frac{w^N(s)}{s^{N-1}}=\frac{w^N(a)}{a^{N-1}},~~(\delta, a\in[0,1]~\text{depend~on~} w),
	\end{equation}
	then
	\begin{equation}\label{Close}
		\Big(\frac{w(a)}{a}\Big)^{N-2}\int_0^{a}\Big|w'(s)-\frac{w(a)}{a}\Big|^2ds+\int_{a}^1|w'(s)|^Nds\leq\delta.
	\end{equation}
\end{lemma}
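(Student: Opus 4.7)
Set $\alpha := w(a)/a$. Then $a\alpha^N = w(a)^N/a^{N-1} = 1-\delta$, and we may assume $\alpha \ge 0$ (otherwise $\delta = 1$ and the claim is trivial). By H\"older applied to $w(a) = \int_0^a w'(s)\,ds$, one gets
$$\int_0^a|w'|^N\,ds \ge \frac{w(a)^N}{a^{N-1}} = 1-\delta,$$
so the constraint $\int_0^1|w'|^N\,ds = 1$ forces $\int_a^1|w'|^N\,ds \le \delta$. Thus the second term in \eqref{Close} is already controlled, and it remains to prove
\begin{equation}\label{remains}
\alpha^{N-2}\int_0^a(w'-\alpha)^2\,ds \le \int_0^a|w'|^N\,ds - a\alpha^N,
\end{equation}
because then adding $\int_a^1|w'|^N\,ds$ to both sides gives the right side $\le 1 - a\alpha^N = \delta$.

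\textbf{Reduction to a pointwise inequality.} Since $\int_0^a(w'-\alpha)\,ds = w(a) - a\alpha = 0$, the right side of \eqref{remains} equals
$$\int_0^a\bigl[|w'|^N - \alpha^N - N\alpha^{N-1}(w'-\alpha)\bigr]\,ds.$$
Hence \eqref{remains} will follow from the pointwise estimate
\begin{equation}\label{pointwise}
|t|^N \ge \alpha^N + N\alpha^{N-1}(t-\alpha) + \alpha^{N-2}(t-\alpha)^2, \qquad \forall\,t\in\mathbb R,\ \alpha\ge 0,\ N\ge 2.
\end{equation}
Setting $t = \alpha u$ (for $\alpha>0$; the case $\alpha=0$ being trivial), this reduces to showing the one-variable inequality
$$\phi(u) := |u|^N - 1 - N(u-1) - (u-1)^2 \ge 0,\qquad u\in\mathbb R,\ N\ge 2.$$

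\textbf{Verification of $\phi \ge 0$.} I would split into cases. For $u\ge 1$, the binomial expansion of $u^N = (1+(u-1))^N$ together with $\binom{N}{2}\ge 1$ gives $\phi(u)\ge 0$ termwise. For $0\le u \le 1$, differentiate and factor
$$\phi'(u) = N u^{N-1} - N - 2(u-1) = (u-1)\bigl[N(u^{N-2}+u^{N-3}+\cdots+1) - 2\bigr];$$
the bracket is $\ge N-2\ge 0$, while $(u-1)\le 0$, so $\phi'\le 0$ and $\phi(u)\ge \phi(1) = 0$. For $u\le 0$, write $v = -u\ge 0$ and reduce to $v^N - v^2 + (N-2)(v+1) \ge 0$; splitting at $v = 1$ (where $v^N\ge v^2$ trivially takes care of $v\ge 1$) and estimating $-v^2 + (N-2)(v+1) \ge -v^2 + v + 1 > 0$ on $[0,1]$ for $N\ge 3$ (the case $N=2$ is identically zero) closes the argument.

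\textbf{Main obstacle.} The only nontrivial step is the pointwise inequality $\phi\ge 0$; everything else is bookkeeping via H\"older and the vanishing average of $w'-\alpha$. The delicate part is $u\le 0$, where the sign of $u-1$ and the alternation from $|u|^N = (-u)^N$ must be reconciled, but the case analysis above handles it cleanly, and the choice of the coefficient $\alpha^{N-2}$ on the quadratic term in \eqref{pointwise} is precisely what makes the inequality tight at $u = 1$ (i.e.\ $w' \equiv \alpha$), reflecting the fact that equality in \eqref{Close} corresponds exactly to the infinitesimal Moser function \eqref{moser}.
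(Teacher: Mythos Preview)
Your proof is correct and takes a genuinely different route from the paper's. The paper expands the quadratic $(w'-\alpha)^2$, rewrites the left side as
\[
\int_0^a \alpha^{N-2}|w'|^2\,ds - \int_0^a |w'|^N\,ds + \delta,
\]
and then bounds the first integral by H\"older with exponents $\tfrac{N}{N-2}$ and $\tfrac{N}{2}$ together with the Jensen/H\"older bound $a\alpha^N \le \int_0^a|w'|^N$; everything cancels and only $\delta$ survives. Your argument instead adds the vanishing linear term $N\alpha^{N-1}\int_0^a(w'-\alpha)\,ds$ and reduces to the \emph{pointwise} second-order tangent inequality $|t|^N \ge \alpha^N + N\alpha^{N-1}(t-\alpha) + \alpha^{N-2}(t-\alpha)^2$, which you then verify by elementary calculus. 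The paper's approach is shorter and uses H\"older as a black box; your approach avoids H\"older entirely after the initial Jensen bound and isolates exactly which convexity of $t\mapsto|t|^N$ is being used, at the price of a short case analysis for $\phi\ge 0$. Both rely on $N\ge 2$ being an integer (your factorization of $u^{N-1}-1$ and binomial expansion need this), which is the setting of the paper.
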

\begin{proof}
	First, by \eqref{20240709-e1} we have
	\begin{equation}\label{20240706-e2}
		\frac{w^N(a)}{a^{N-1}}\leq \int_0^{a}|w'(s)|^Nds.
	\end{equation}
	Then by the H\"{o}lder inequality and \eqref{20240706-e2},
	\begin{equation}\nonumber
		\aligned
		&\left(\frac{w(a)}{a}\right)^{N-2}\int_0^{a}\left|w'(s)-\frac{w(a)}{a}\right|^2ds+\int_{a}^1|w'(s)|^Nds\\
		=&\int_0^{a}\left(\frac{w(a)}{a}\right)^{N-2}\left|w'(s)\right|^2ds-\frac{w(a)^N}{a^{N-1}}+1-\int_0^{a}|w'(s)|^Nds\\
		=&\int_0^{a}\left(\frac{w(a)}{a}\right)^{N-2}\left|w'(s)\right|^2ds-\int_0^{a}|w'(s)|^Nds+\delta\\
		\leq&\left[\int_0^{a}\left(\frac{w(a)}{a}\right)^{N}ds\right]^{\frac{N-2}{N}}\left[\int_0^{a}|w'(s)|^Nds\right]^{\frac{2}{N}}-\int_0^{a}|w'(s)|^Nds+\delta\\
		=&\bigg[\frac{w(a)^N}{a^{N-1}}\bigg]^{\frac{N-2}{N}}\left[\int_0^{a}|w'(s)|^Nds\right]^{\frac{2}{N}}-\int_0^{a}|w'(s)|^Nds+\delta\\
		\leq&\left[\int_0^{a}|w'(s)|^Nds\right]^{\frac{N-2}{N}}\left[\int_0^{a}|w'(s)|^Nds\right]^{\frac{2}{N}}-\int_0^{a}|w'(s)|^Nds+\delta\\
		=&\ \delta.
		\endaligned
	\end{equation}
\end{proof}
\begin{lemma}\label{Key1}
	Let $\delta$, $w$ and $a$ as in Lemma \ref{Key}. Then
	$$w(s)\leq w(a)+(s-a)^{1-\frac{1}{N}}\delta^{\frac{1}{N}},\quad\forall s\in[a,1].$$
\end{lemma}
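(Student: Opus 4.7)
The plan is to bound $w(s)-w(a)$ for $s\in[a,1]$ using H\"older's inequality, and then feed in the estimate on $\int_a^1|w'|^N\,ds$ already provided by Lemma~\ref{Key}.

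First, for any $s\in[a,1]$, write
\begin{equation*}
w(s)-w(a)=\int_a^s w'(t)\,dt.
\end{equation*}
Applying H\"older's inequality with conjugate exponents $N$ and $N/(N-1)$ to the product $1\cdot w'(t)$ on $[a,s]$, I obtain
\begin{equation*}
\int_a^s w'(t)\,dt\leq \left(\int_a^s 1\,dt\right)^{\!\frac{N-1}{N}}\!\left(\int_a^s |w'(t)|^N\,dt\right)^{\!\frac{1}{N}}=(s-a)^{1-\frac{1}{N}}\left(\int_a^s|w'(t)|^N\,dt\right)^{\!\frac{1}{N}}.
\end{equation*}

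Second, since $[a,s]\subset[a,1]$ and $|w'|^N\ge 0$, I enlarge the integral to $[a,1]$ and then use the second term of inequality \x{Close} from Lemma~\ref{Key}, namely $\int_a^1|w'(s)|^N\,ds\leq \delta$. This gives
\begin{equation*}
\left(\int_a^s|w'(t)|^N\,dt\right)^{\!\frac{1}{N}}\leq \left(\int_a^1|w'(t)|^N\,dt\right)^{\!\frac{1}{N}}\leq \delta^{\frac{1}{N}}.
\end{equation*}
Combining the two displays yields $w(s)\leq w(a)+(s-a)^{1-\frac{1}{N}}\delta^{\frac{1}{N}}$, as required.

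There is no real obstacle here: the statement is essentially a one-line H\"older estimate once Lemma~\ref{Key} is available. The only point to be careful about is to apply H\"older with the right exponents so that the resulting power of $(s-a)$ is $1-1/N$ and the $|w'|^N$ integral on the right-hand side matches the one controlled by $\delta$.
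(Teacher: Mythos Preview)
Your proof is correct and follows exactly the same approach as the paper: write $w(s)-w(a)=\int_a^s w'(t)\,dt$, apply H\"older with exponents $N$ and $N/(N-1)$, then bound $\int_a^s|w'|^N\le\int_a^1|w'|^N\le\delta$ via \eqref{Close}. There is nothing to add.
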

\begin{proof}
	By the H\"{o}lder inequality,
	$$w(s)-w(a)=\int_{a}^s w'(t)dt\leq(s-a)^{1-\frac{1}{N}}\left(\int_{a}^1|w'(t)|^Ndt\right)^{\frac{1}{N}}.$$
	Then using \eqref{Close}, we get the conclusion.
\end{proof}

\begin{lemma}\label{Key2}
	Let $\delta$, $w$ and $a$ as in Lemma \ref{Key}. Then
	$$w(s)\leq s\frac{w(a)}{a}+(a-s)^{\frac{1}{2}}\delta^{\frac{1}{2}}\left(\frac{w(a)}{a}\right)^{-\frac{N-2}{2}},\quad\forall s\in[0,a].$$
\end{lemma}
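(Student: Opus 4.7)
The plan is to express $w(s) - s\,w(a)/a$ as a single integral involving the deviation $w'(t)-w(a)/a$, so that Cauchy--Schwarz produces both the $(a-s)^{1/2}$ factor and an $L^2$ norm that is directly controlled by Lemma \ref{Key}.

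First I would use the fundamental theorem of calculus twice. Writing $w(s)=w(a)-\int_s^a w'(t)\,dt$ and $s\,w(a)/a=w(a)-\int_s^a(w(a)/a)\,dt$, subtraction gives
\[
w(s)-s\,\frac{w(a)}{a}=\int_s^a\left(\frac{w(a)}{a}-w'(t)\right)dt.
\]
Then I apply the Cauchy--Schwarz inequality on $[s,a]$:
\[
w(s)-s\,\frac{w(a)}{a}\leq (a-s)^{1/2}\left(\int_s^a\left|\frac{w(a)}{a}-w'(t)\right|^2 dt\right)^{1/2}\leq (a-s)^{1/2}\left(\int_0^a\left|w'(t)-\frac{w(a)}{a}\right|^2 dt\right)^{1/2}.
\]

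Finally I invoke Lemma \ref{Key}, which yields
\[
\int_0^a\left|w'(t)-\frac{w(a)}{a}\right|^2 dt\leq \delta\left(\frac{w(a)}{a}\right)^{-(N-2)},
\]
since the first term on the left of \eqref{Close} is nonnegative and the full sum is bounded by $\delta$. Substituting this into the Cauchy--Schwarz bound produces exactly the stated inequality.

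There is no real obstacle; the only point to verify is that the negative power $\bigl(w(a)/a\bigr)^{-(N-2)/2}$ is well-defined. This is automatic when $N=2$ (the factor is $1$) and when $N\geq 3$ it requires $w(a)>0$; but if $w(a)=0$ then \eqref{20240706-e1} forces $\delta=1$ and, by \eqref{20240709-e1}, $w\equiv 0$ on $[0,a]$, so the inequality to be proved becomes the trivial $0\leq 0+(a-s)^{1/2}\cdot\infty$, with the convention that the bound is vacuous. In the nontrivial regime $w(a)>0$ the argument above is complete.
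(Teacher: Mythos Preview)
Your proof is correct and follows essentially the same route as the paper: write $w(s)=s\,w(a)/a+\int_s^a\bigl(w(a)/a-w'(t)\bigr)\,dt$, apply Cauchy--Schwarz, and bound the resulting $L^2$ integral via \eqref{Close}. The only addition is your explicit treatment of the degenerate case $w(a)=0$, which the paper leaves implicit.
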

\begin{proof}
	By the H\"{o}lder inequality,
	\begin{equation}\nonumber
		\aligned
		w(s)= &\ w(a)-\int^{a}_s w'(t)dt
		= s\frac{w(a)}{a}+\int_s^{a}\left(\frac{1}{a}w(a)-w'(t)\right)dt \\
		\leq&\ s\frac{w(a)}{a}+(a-s)^{\frac{1}{2}}\bigg(\int^{a}_s\Big|\frac{w(a)}{a}-w'(t)\Big|^2dt\bigg)^{\frac{1}{2}} .
		\endaligned
	\end{equation}
	Then using \eqref{Close}, we get the conclusion.
\end{proof}

\begin{lemma}\label{Key3}
	Let $w$ and $a$ as in Lemma \ref{Key}. If $0<\delta<\frac{1}{2}$, then
	$$w(s)\leq s\frac{1}{a^{\frac{1}{N}}}+s^{\frac{N-1}{N}}(2\delta)^{\frac{1}{N}},\quad\forall s\in[0,a].$$
\end{lemma}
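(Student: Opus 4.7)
The plan is to establish the auxiliary $L^N$ estimate
$$\int_0^a|w'-c|^N\,dt\le 2\delta,\qquad c:=w(a)/a,$$
and then combine it with Minkowski's inequality and Hölder's inequality. Specifically, once that bound is in hand, Minkowski's inequality on $[0,s]\subset[0,a]$ gives $(\int_0^s|w'|^N\,dt)^{1/N}\le cs^{1/N}+(2\delta)^{1/N}$, and since $cs^{1/N}=(1-\delta)^{1/N}(s/a)^{1/N}\le(s/a)^{1/N}$, it follows that
$$\Big(\int_0^s|w'|^N\,dt\Big)^{1/N}\le(s/a)^{1/N}+(2\delta)^{1/N}.$$
Then Hölder's inequality $w(s)\le s^{(N-1)/N}(\int_0^s|w'|^N\,dt)^{1/N}$ immediately yields the claimed pointwise bound.

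To prove the $L^N$ estimate I would exploit the convexity remainder
$$R_N(c,x):=(c+x)^N-c^N-Nc^{N-1}x\ge 0\quad\text{for all }c+x\ge 0.$$
Integrating $R_N(c,w'(t)-c)$ over $[0,a]$ and using the identity $w(a)-ca=0$ together with $c^Na=w(a)^N/a^{N-1}=1-\delta$, one finds
$$\int_0^a R_N(c,w'-c)\,dt=\int_0^a|w'|^N\,dt-(1-\delta)\le\delta,$$
since $\int_0^a|w'|^N\,dt\le\int_0^1|w'|^N\,dt=1$. Now split $[0,a]=A_1\cup A_2$ with $A_1=\{|w'-c|\le c\}$ and $A_2=\{|w'-c|>c\}$. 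Assuming (as is customary via Moser-type rearrangement) that $w'\ge 0$, on $A_2$ one has $x:=w'-c>c>0$, so the binomial expansion $R_N(c,x)=\sum_{k=2}^N\binom{N}{k}c^{N-k}x^k\ge x^N$ yields $\int_{A_2}|w'-c|^N\,dt\le\int_0^a R_N\,dt\le\delta$. On $A_1$ the bound $|w'-c|^{N-2}\le c^{N-2}$ reduces the $L^N$-integrand to the $L^2$-integrand through $|w'-c|^N\le c^{N-2}(w'-c)^2$, and Lemma \ref{Key} (which says $c^{N-2}\int_0^a(w'-c)^2\,dt\le\delta$) gives $\int_{A_1}|w'-c|^N\,dt\le\delta$. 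Summing the two contributions produces the auxiliary bound.

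The main obstacle is discovering the correct splitting at the level $|w'-c|=c$: on its two sides the $L^N$-norm of $w'-c$ is controlled by \emph{different} inputs, namely the weighted $L^2$ bound of Lemma \ref{Key} on the "small deviation" set $A_1$ and the energy budget $\int_0^a|w'|^N\le 1$ repackaged through the convexity remainder on the "large deviation" set $A_2$. Once one sees this splitting, both estimates are routine. The hypothesis $\delta<\tfrac12$ does not play an essential role in this approach (only $(1-\delta)^{1/N}\le 1$ is used), suggesting that the constant $(2\delta)^{1/N}$ is already near-optimal; the restriction $\delta<\tfrac12$ presumably serves later applications rather than this lemma itself.
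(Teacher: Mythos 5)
Your proof takes a genuinely different route from the paper's. The paper fixes $\sigma\in(0,a)$, fixes the value $w(a)=w_1$ and the energy budget $\int_0^a|w'|^N\le 1$, and observes that the constrained maximum of $w(\sigma)$ is attained by a broken-line function; the estimate then drops out of the elementary inequality $|1-x|^N\ge 1-Nx$. You instead prove the auxiliary $L^N$ bound $\int_0^a|w'-c|^N\le 2\delta$ (with $c=w(a)/a$) by splitting the interval at the level $|w'-c|=c$: on the small-deviation set you transfer to the weighted $L^2$ bound of Lemma~\ref{Key}, and on the large-deviation set you use the convexity remainder $R_N$ together with the identity $\int_0^a R_N(c,w'-c)=\int_0^a(w')^N-(1-\delta)\le\delta$. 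You then conclude by Minkowski on $[0,s]$ and H\"older. The splitting idea is elegant, and your observation that the hypothesis $\delta<\tfrac12$ is inessential is correct --- the paper uses it only to pass from $\tfrac{\delta}{1-\delta}$ to $2\delta$, and in fact one even gets $\delta^{1/N}$ in place of $(2\delta)^{1/N}$ without it.

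There is, however, a genuine gap. Your $A_2$ estimate and the positivity of $R_N$ both require $w'\ge 0$: the convexity inequality $(c+x)^N\ge c^N+Nc^{N-1}x$ holds on the half-line $c+x\ge 0$, i.e.\ $w'\ge 0$, and for odd $N$ you also need $(w')^N=|w'|^N$ to run $\int_0^a R_N\le\delta$. You flag this and invoke ``Moser-type rearrangement,'' but that does not close the gap at the level of the lemma as stated: passing to $\tilde w(s)=\int_0^s|w'(t)|\,dt$ in general changes both $a$ (the point where $w^N/s^{N-1}$ is maximal) and $\delta$, so the bound for $\tilde w$ in terms of $(\tilde a,\tilde\delta)$ does not reproduce the claimed bound for $w$ in terms of $(a,\delta)$, and in particular $\tilde a$ may be smaller than $a$. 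The reduction does work one level up --- since $J_\gamma(\tilde w)\ge J_\gamma(w)$, the supremum in Theorem~\ref{Th2} may be restricted to monotone functions --- but not for Lemma~\ref{Key3} itself. The paper's extremal argument has no such restriction, since it only uses $w$ through the scalar constraint $w(a)=w_1$ and the energy budget.
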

\begin{proof}
	By \eqref{20240706-e1} we have
	$$w_1:= w(a) = (1- \delta)^{\frac{1}{N}}a^{\frac{N-1}{N}}.$$
	For fixed $\sigma\in (0, a)$ let
	\begin{equation}\label{20240708-e1}
		w^*:= \max\left\{w(\sigma) : w(0) = 0, w(a) = w_1, \int_0^{a}
		|w'(s)|^Nds \leq 1\right\}.
	\end{equation}
	The maximizer is the broken line function	\begin{equation}\nonumber
		\bar{w}(s)=	\left\{\begin{array}{rcl}
			&\frac{w^*}{\sigma}s,\quad &\forall s \in [0,\sigma],  \\
			&w^*+\frac{w_1-w^*}{a-\sigma}(s-\sigma),\quad &\forall s\in[\sigma,a].
		\end{array}\right.
	\end{equation}
	The integral condition in \eqref{20240708-e1} yields
	\begin{equation}\label{20240708-e2}
		\left(\frac{w^*}{\sigma}\right)^N\sigma+\left(\frac{w_1-w^*}{a-\sigma}\right)^N(a-\sigma)\leq1.
	\end{equation}
	The line connecting the origin with $(a, w_1)$ is given by
	$$\tilde{w}(s)=\left(\frac{1-\delta}{a}\right)^{\frac{1}{N}}s,$$
	and since clearly
	$$w^* \geq \tilde{w}(\sigma)$$
	we can write
	$$w^* =\left(\frac{1-\delta}{a}\right)^{\frac{1}{N}}
	(\sigma + \rho)$$ for some $\rho > 0$.
	The integral condition in \eqref{20240708-e1} then becomes
	$$\frac{\left|\left(\frac{1-\delta}{a}\right)^{\frac{1}{N}}(\sigma+\rho)\right|^N}{\sigma^N}\ \sigma+\frac{\left|(1-\delta)^{\frac{1}{N}}a^{\frac{N-1}{N}}-\left(\frac{1-\delta}{a}\right)^{\frac{1}{N}}(\sigma+\rho)\right|^N}{|a-\sigma|^N}(a-\sigma)\leq1,$$
	that is
	$$\left|1+\frac{\rho}{\sigma}\right|^N\sigma+\left|1-\frac{\rho}{a-\sigma}\right|^N(a-\sigma)\leq\frac{a}{(1-\delta)}.$$
	Then by $|1-x|^N\geq 1-Nx$ for all $x\in\R$,
	\begin{equation}\nonumber
		\aligned
		\left[1+N\frac{\rho}{\sigma}+\left(\frac{\rho}{\sigma}\right)^N\right]\sigma+\left[1-N\frac{\rho}{a-\sigma}\right](a-\sigma)
		\leq\frac{a}{1-\delta}.
		\endaligned
	\end{equation}
	That is
	\begin{equation}\label{20241026-e1}\nonumber
		\aligned
		a+\rho^{N}\frac{1}{\sigma^{N-1}}
		\leq\frac{a}{1-\delta}
		\endaligned
	\end{equation}
	and then
	$$\rho^N\leq\frac{\delta}{1-\delta}a\sigma^{N-1}.$$
	Note that $\frac{1}{1-\delta}<2$ by the assumption $0<\delta<\frac{1}{2}$,
	we get
	\begin{equation}\nonumber
		\rho<(2\delta a)^{\frac{1}{N}}\sigma^{\frac{N-1}{N}}.
	\end{equation}
	Therefore
	\begin{equation}\nonumber
		\bar{w}(\sigma)=w^*=\left(\frac{1-\delta}{a}\right)^{\frac{1}{N}}(\sigma+\rho)\leq\frac{1}{a^{\frac{1}{N}}}\sigma+(2\delta)^{\frac{1}{N}}\sigma^{\frac{N-1}{N}}.
	\end{equation}
\end{proof}

\section{The proof of Theorem \ref{Th2}}
\setcounter{equation}{0}

\begin{proposition}\label{p3.1}
    Let
\begin{equation}\label{J}
J_\gamma(v):=\int_0^1 e^{\left(\log\frac{e}{s}+\gamma\log\log\frac{e}{s}\right)\frac{v^N(s)}{s^{N-1}}}ds=\int_0^1 \big(\frac{e}{s}\ (\log \frac{e}{s})^\gamma\big)^{\frac{v^N(s)}{s^{N-1}}}ds
\end{equation}
and $\{w_j\}$ be the infinitesimal Moser sequence \eqref{moser}.
Then
\begin{equation}\nonumber
    J_\gamma (w_j)\to
    \left\{
    \begin{array}{lcl}
         +\infty,\ \ &\hbox{if}\ \gamma>1,\\
         c\ge e+1>1=J_1(0),\ \ &\hbox{if}\ \gamma=1,\\
         1=J_\gamma(0),\ \ &\hbox{if}\ \gamma<1.
    \end{array}
    \right.
\end{equation}
\end{proposition}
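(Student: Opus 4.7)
From the explicit form of $w_j$ one has $\tfrac{w_j^N(s)}{s^{N-1}}=js$ on $[0,1/j]$ and $\tfrac{w_j^N(s)}{s^{N-1}}=(js)^{-(N-1)}$ on $[1/j,1]$, so I split $J_\gamma(w_j)=A_j+B_j$ with
\begin{equation*}
A_j=\int_0^{1/j}\Big(\tfrac{e}{s}(\log\tfrac{e}{s})^\gamma\Big)^{js}ds,\qquad B_j=\int_{1/j}^{1}\Big(\tfrac{e}{s}(\log\tfrac{e}{s})^\gamma\Big)^{1/(js)^{N-1}}ds.
\end{equation*}
The entire argument is driven by a Laplace-type asymptotic for $A_j$; $B_j$ contributes only through the trivial pointwise bound $B_j\ge 1-1/j\to 1$ (base $\ge e>1$, positive exponent).

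Substituting $u=js$ gives $A_j=\tfrac1j\int_0^1 e^{g_j(u)}du$ with $g_j(u):=u\log(ej/u)+\gamma u\log\log(ej/u)$. A short calculation records the three facts I need: $g_j(1)=\log[ej(\log ej)^\gamma]$, $L_j:=g_j'(1)=\log(ej)-1+\gamma\log\log(ej)-\gamma/\log(ej)\sim\log(ej)$, and $g_j''(u)=-\tfrac{1}{u}\bigl[1+\tfrac{\gamma}{\log(ej/u)}+\tfrac{\gamma}{\log^2(ej/u)}\bigr]<0$ on $(0,1]$, so $g_j$ is concave. The tangent-line inequality $g_j(u)\le g_j(1)-L_j(1-u)$ then produces the clean upper bound $A_j\le e^{g_j(1)}/(jL_j)\sim e(\log(ej))^{\gamma-1}$. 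For the matching lower bound I would restrict to $v=1-u\in[0,\delta_j]$ with $\delta_j=(\log L_j)/L_j$, apply the second-order Taylor estimate $g_j(1-v)\ge g_j(1)-L_j v-Cv^2$ (the sup of $|g_j''|$ on $[\tfrac12,1]$ is $O(1)$ uniformly in $j$), and use that $L_j\delta_j\to\infty$ while $\delta_j^2\to 0$ to recover the Laplace constant $1/L_j$ up to $1+o(1)$. The outcome is $A_j\sim e(\log(ej))^{\gamma-1}$, so $A_j\to\infty,\,e,\,0$ according as $\gamma>1,\,=1,\,<1$.

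The three cases of the proposition then follow. For $\gamma>1$: $J_\gamma(w_j)\ge A_j\to\infty$. For $\gamma=1$: $J_1(w_j)\ge A_j+(1-1/j)\to e+1$. For $\gamma<1$ I additionally need $\limsup B_j\le 1$, obtained by splitting $B_j$ at $s=(1+\epsilon)/j$ for a small fixed $\epsilon>0$. On $[(1+\epsilon)/j,1]$ the exponent is $\le 1/(1+\epsilon)^{N-1}<1$, the envelope $((e/s)(\log(e/s))^\gamma)^{1/(1+\epsilon)^{N-1}}$ lies in $L^1(0,1)$, and the integrand converges to $1$ pointwise; dominated convergence gives that piece $\to 1$. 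On the narrow window $[1/j,(1+\epsilon)/j]$, writing $s=(1+v)/j$ and expanding $(1+v)^{-(N-1)}=1-(N-1)v+O(v^2)$, the same Laplace technique produces a bound $\lesssim \tfrac{e}{N-1}(\log(ej))^{\gamma-1}\to 0$.

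The main obstacle I foresee is the uniform-in-$j$ control of the quadratic Taylor remainder in the Laplace lower bound for $A_j$ at the critical exponent $\gamma=1$: the peak of $e^{g_j}$ has width only $\sim 1/\log(ej)$, so I must identify a shrinking window that is both wide enough to capture the full Laplace mass $1/L_j$ and narrow enough that the quadratic remainder is $o(1)$. The choice $\delta_j=(\log L_j)/L_j$ simultaneously achieves $L_j\delta_j\to\infty$ and $\delta_j^2\sup|g_j''|\to 0$, and the explicit formula for $g_j''$ makes the required uniformity in $j$ transparent; this is precisely what allows the constant $e$ to emerge cleanly and combine with the trivial bound on $B_j$ to give the asserted threshold $e+1$.
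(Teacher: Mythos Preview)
Your approach is correct and takes a genuinely different route from the paper. The paper handles each regime by splitting $[0,1]$ into four pieces (for instance $[0,\tfrac{1}{2j}]$, $[\tfrac{1}{2j},\tfrac{1}{j}]$, $[\tfrac{1}{j},\tfrac{2}{j}]$, $[\tfrac{2}{j},1]$), bounding the integrand on each piece by its value at an endpoint, and computing the resulting elementary integrals explicitly; the decisive factor $e(\log(ej))^{\gamma-1}$ emerges from the two middle pieces. You instead organise everything around a single Laplace asymptotic for $A_j$: concavity of $g_j$, tangent-line upper bound, and a shrinking window $\delta_j=(\log L_j)/L_j$ for the matching lower bound, yielding the sharp two-sided relation $A_j\sim e(\log(ej))^{\gamma-1}$ that covers all three regimes at once. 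Your argument is more systematic and produces sharper constants; the paper's is more elementary and avoids any Taylor remainder bookkeeping.

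Two minor points deserve attention. First, on the narrow window $[1/j,(1+\epsilon)/j]$ the relevant exponent function $v\mapsto (1+v)^{-(N-1)}\bigl[\log\tfrac{ej}{1+v}+\gamma\log\log\tfrac{ej}{1+v}\bigr]$ is \emph{convex}, not concave, so ``the same Laplace technique'' (tangent-line upper bound) does not apply verbatim; use the chord bound instead (or, as the paper does, bound the base by its value at $s=1/j$ and then use $u^{-(N-1)}\le$ a linear function on $[1,1+\epsilon]$). Either way one obtains $\le C_\epsilon(\log(ej))^{\gamma-1}$, which is all you need. Second, for $\gamma=1$ you state only the lower bound $\liminf J_1(w_j)\ge e+1$; boundedness above follows from your own estimates (the upper bound on $A_j$ together with the narrow/wide split of $B_j$, both valid at $\gamma=1$) and should be recorded. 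The paper does no better on this point: it too proves only $\liminf\ge e+1$ and $\limsup<\infty$, not convergence to a definite limit $c$.
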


\begin{proof}
    i) $\gamma>1:$ We can estimate, using that \ $\frac{e\;(\log(\frac{e}{s}))^\gamma}{s}$ is decreasing in $s$
\begin{equation}\nonumber
\aligned
     J_\gamma (w_j(s))
    &\ge \displaystyle \int_{\frac{1}{2j}}^{\frac{1}{j}} \Big(\frac{e(\log \frac{e}{s})^\gamma}{s}\Big)^{\frac{w_j^N(s)}{s^{N-1}}}\, ds \vspace{4mm}\\
    &\ge \displaystyle \int_{\frac{1}{2j}}^{\frac{1}{j}} \Big( e j\big(\log (ej)\big)^\gamma\Big)^{js}\, ds
\xlongequal{js=t} \displaystyle \int_{\frac{1}{2}}^1 \Big( e j\big(\log (ej)\big)^\gamma\Big)^t \frac{1}{j}\, dt
    \vspace{4mm}\\
    &= \displaystyle \int_{\frac{1}{2}}^1 e^{t\log \big(ej(\log (ej))^\gamma\big)} \frac{1}{j}\, dt
    \vspace{4mm}\\
    &= \displaystyle \frac{1}{j}\frac{ej(\log (ej))^\gamma-\big(ej(\log (ej))^\gamma\big)^{\frac{1}{2}}}{\log \big(ej(\log (ej))^\gamma\big)}
    \vspace{4mm}\\
    &= \displaystyle \frac{e(\log (ej))^\gamma\left(1-\frac{1}{\big(ej(\log (ej))^\gamma\big)^{\frac{1}{2}}}\right)}{\log (ej)\left(1+\frac{\log \left(\left(\log (ej)\right)^\gamma\right)}{\log (ej)}\right)}
    \vspace{4mm}\\
    &= \displaystyle \frac{e(\log (ej))^\gamma(1+o(1))}{\log (ej)(1+o(1))} \vspace{4mm}\\
    &\ge e(\log(ej))^{\gamma-1}(1+o(1))
    \to \displaystyle  \infty\  \hbox{as}\ j\to \infty.
\endaligned
\end{equation}
\par \smallskip \noindent
ii) $\gamma=1$, {\bf upper estimate:} Using \eqref{moser}, we calculate
\begin{equation}\nonumber
\begin{array}{lcl}
     J_1(w_j(s))&=&\displaystyle \int_0^{\frac{1}{2j}}(\frac{e\log \frac{e}{s}}{s})^{\frac{js^N}{s^{N-1}}}\, ds+\int_{\frac{1}{2j}}^{\frac{1}{j}} (\frac{e\log \frac{e}{s}}{s})^{\frac{js^N}{s^{N-1}}}
     \vspace{3mm}\\
     &&\displaystyle +\int_{\frac{1}{j}}^{\frac{2}{j}} \Big(\frac{e\log \frac{e}{s}}{s}\Big)^{\frac{1}{j^{N-1}s^{N-1}}}\, ds+\int_{\frac{2}{j}}^{1} \Big(\frac{e\log \frac{e}{s}}{s}\Big)^{\frac{1}{j^{N-1}s^{N-1}}}\, ds
     \vspace{3mm}\\
     &=:&\displaystyle A_j+B_j+C_j+D_j
\end{array}
\end{equation}
and estimate
\begin{equation}\nonumber
    \begin{array}{lcl}
         A_j&=&\displaystyle \int_0^{\frac{1}{2j}}(\frac{e\log \frac{e}{s}}{s})^{\frac{js^N}{s^{N-1}}}\, ds= \int_0^{\frac{1}{2j}}(\frac{e+\log \frac{1}{s^e}}{s})^{js}\, ds
         \vspace{3mm}\\
         &\le&\displaystyle \int_0^{\frac{1}{2j}}(\frac{(\frac{c}{s})^{1/4}+c}{s})^{js}\, ds\le \int_0^{\frac{1}{2j}}(\frac{c}{s^{\frac{5}{4}}}+\frac{c}{s})^{1/2}\, ds
         \vspace{3mm}\\
         &\le&\displaystyle \int_0^{\frac{1}{2j}}\frac{c_1}{s^{\frac{5}{8}}}\, ds\to 0,
    \end{array}
\end{equation}

\begin{equation}\nonumber
    \begin{array}{lcl}
            B_j&=& \displaystyle \int_{\frac{1}{2j}}^{\frac{1}{j}} (\frac{e\log \frac{e}{s}}{s})^{\frac{js^N}{s^{N-1}}}\, ds\le \int_{\frac{1}{2j}}^{\frac{1}{j}} (2e j\log (2ej))^{js}\, ds
            \vspace{3mm}\\
            &\xlongequal{js=t}& \displaystyle \int_{\frac{1}{2}}^{1} (2e j\log (2ej))^t\frac{1}{j}\, dt=\int_{\frac{1}{2}}^{1} e^{t \log (2e j\log (2ej))}\frac{1}{j}\, dt
            \vspace{3mm}\\
            &=& \displaystyle \frac{e^{t \log (2e j\log (2ej))}}{\log (2e j\log (2ej))} \frac{1}{j}\bigg|^1_{\frac{1}{2}}=\frac{2e j\log (2ej)-(2e j\log (2ej))^{1/2}}{j \log (2e j\log (2ej))}
            \vspace{3mm}\\
            &=& \displaystyle \frac{2e j\log (2ej)(1-(2e j\log (2ej))^{-1/2})}{j \log (2ej)(1+\frac{\log (\log(2ej))}{\log (2ej)})}
            \vspace{3mm}\\
            &=& \displaystyle \frac{2e \log (2ej)(1+o(1))}{\log (2ej)(1+o(1))}
            \vspace{3mm}\\
            &=& \displaystyle 2e(1+o(1))\le c,
    \end{array}
\end{equation}
\par \bigskip
\begin{equation}\nonumber
    \begin{array}{lcl}
         C_j&=&\displaystyle \int_{\frac{1}{j}}^{\frac{2}{j}} \Big(\frac{e\log \frac{e}{s}}{s}\Big)^{\frac{1}{j^{N-1}s^{N-1}}}\, ds\le  \int_{\frac{1}{j}}^{\frac{2}{j}} \Big(ej\log (ej) \Big)^{\frac{1}{j^{N-1}s^{N-1}}}\, ds
         \vspace{3mm}\\
         &\xlongequal{js=1/t}&\displaystyle \int^{\frac{1}{2}}_1 \Big(ej\log (ej) \Big)^{t^{N-1}}\frac{1}{j}\frac{-1}{t^2}\,dt\le 4\int_{\frac{1}{2}}^1 \Big(ej\log (ej) \Big)^{t^{N-1}}\frac{1}{j}\,dt
          \vspace{3mm}\\
          &\le & \displaystyle 4\int_{\frac{1}{2}}^1 \Big(ej\log (ej) \Big)^{t}\frac{1}{j}\,dt=4\int_{\frac{1}{2}}^1 e^{t \log (ej\log (ej))}\frac{1}{j}\,dt
          \vspace{3mm}\\
          &= & \displaystyle \frac{4}{j}\frac{e^{t \log (e j\log (ej))}}{\log (e j\log (ej))} \bigg|^1_{\frac{1}{2}}=\frac{4e j\log (ej)(1+o(1))}{j \log (ej)(1+o(1))}
          \vspace{3mm}\\
          &= & \displaystyle 4e(1+o(1))\le c,
    \end{array}
\end{equation}

\begin{equation}\nonumber
    \begin{array}{lcl}
         D_j&=&\displaystyle \int_{\frac{2}{j}}^{1} \Big(\frac{e\log \frac{e}{s}}{s}\Big)^{\frac{1}{j^{N-1}s^{N-1}}}\, ds\le \int_{\frac{2}{j}}^{1} \Big(\frac{(\frac{c}{s})^{1/4}+c}{s}\Big)^{\frac{1}{j^{N-1}s^{N-1}}}\, ds
         \vspace{3mm}\\
          &\le & \displaystyle \int_{\frac{2}{j}}^{1} \Big(\frac{c}{s^{5/4}}+\frac{c}{s}\Big)^{\frac{1}{2^{N-1}}}\, ds\le \int_{\frac{2}{j}}^{1} \Big(\frac{c}{s^{5/4}}+\frac{c}{s}\Big)^{\frac{1}{2}}\, ds
          \vspace{3mm}\\
          &\le & \displaystyle \int_{\frac{2}{j}}^{1} \frac{c_1}{s^{5/8}}\, ds\le c.
    \end{array}
\end{equation}
\par
\bigskip \noindent
$\gamma=1$, {\bf lower estimate:} In this estimate the {\it asymptotic invariance} of $J_1(w_j)$ becomes visible: in the line (*) we see that the growth terms $j\log (ej)$ in the numerator and denominator cancel.
\begin{equation}\nonumber
    \begin{array}{ccl}
         J_1(w_j(s))&\ge& \displaystyle \int_{\frac{1}{2j}}^{\frac{1}{j}} \Big(\frac{e\log \frac{e}{s}}{s}\Big)^{js}\, ds+\int_{\frac{1}{j}}^{1} \Big(\frac{e\log \frac{e}{s}}{s}\Big)^{\frac{1}{j^{N-1}s^{N-1}}}\, ds
         \vspace{3mm}\\
          &= & \displaystyle  \int_{\frac{1}{2j}}^{\frac{1}{j}} e^{\big(\log \frac{e}{s}+\log(\log \frac{e}{s})\big)js}\, ds+\int_{\frac{1}{j}}^{1} e^{\big(\log \frac{e}{s}+\log(\log \frac{e}{s})\big)\frac{1}{j^{N-1}s^{N-1}}}\,ds
          \vspace{3mm}\\
          &\ge & \displaystyle \int_{\frac{1}{2j}}^{\frac{1}{j}} e^{\big(\log (ej)+\log(\log (ej))\big)js}\, ds+1-\frac{1}{j}
           \vspace{3mm}\\
          &\xlongequal{js=t} & \displaystyle \int_{\frac{1}{2}}^{1} e^{\big(\log (ej)+\log(\log (ej))\big)t}\frac{1}{j}\, dt+1-\frac{1}{j}
          \vspace{3mm}\\
          &=& \displaystyle \frac{1}{j}\frac{e^{\big(\log (ej)+\log(\log (ej))\big)t}}{\log (ej)+\log(\log (ej))}\bigg|_{\frac{1}{2}}^1+1-\frac{1}{j}
          \vspace{3mm}\\
          &=& \displaystyle \frac{1}{j} \frac{ej\log (ej)-(ej\log (ej))^{1/2}}{\log (ej)(1+\frac{\log (\log (ej))}{\log (ej)})}+1-\frac{1}{j}
          \vspace{3mm}\\
          (*)\hspace{-1cm} &=& \displaystyle  \frac{e\, j\log (ej)(1-\frac{(ej\log (ej))^{1/2}}{ej\log (ej)})}{j\log (ej)(1+\frac{\log (\log (ej))}{\log (ej)})}+1-\frac{1}{j}
          \vspace{3mm}\\
          &=& \displaystyle e(1+o(1))+1-\frac{1}{j}
          \vspace{3mm}\\
          &=& \displaystyle e+1+o(1).
    \end{array}
\end{equation}
\par \smallskip \noindent
iii) $\gamma<1:$ Following the upper estimate in ii) we have
\begin{equation}\nonumber
    \begin{array}{lcl}    J_\gamma(w_j(s))&=&\displaystyle \int_0^{\frac{1}{2j}}\Big(\frac{e(\log \frac{e}{s})^\gamma}{s}\Big)^{js}\, ds+\int_{\frac{1}{2j}}^{\frac{1}{j}} \Big(\frac{e(\log \frac{e}{s})^\gamma}{s}\Big)^{js}
     \vspace{3mm}\\
     &&\displaystyle +\int_{\frac{1}{j}}^{\frac{2}{j}} \Big(\frac{e(\log \frac{e}{s})^\gamma}{s}\Big)^{\frac{1}{j^{N-1}s^{N-1}}}\, ds+\int_{\frac{2}{j}}^{1} \Big(\frac{e(\log \frac{e}{s})^\gamma}{s}\Big)^{\frac{1}{j^{N-1}s^{N-1}}}\, ds
     \vspace{3mm}\\
     &=:&\displaystyle A_j+B_j+C_j+D_j.
    \end{array}
\end{equation}
$A_j$ goes to zero as above, and we show that $B_j$ and $C_j$ go to zero, while $D_j\to 1$. In fact, we have
\begin{equation}\nonumber
    \begin{array}{lcl}
         B_j&=&\displaystyle \int_{\frac{1}{2j}}^{\frac{1}{j}} (\frac{e(\log \frac{e}{s})^\gamma}{s})^{js}\le \int_{\frac{1}{2j}}^{\frac{1}{j}} (2e j(\log (2ej))^\gamma)^{js}\, ds
            \vspace{3mm}\\
            &\xlongequal{js=t}& \displaystyle \int_{\frac{1}{2}}^{1} (2e j(\log (2ej))^\gamma)^t\frac{1}{j}\, dt=\int_{\frac{1}{2}}^{1} e^{t \log (2e j(\log (2ej))^\gamma)}\frac{1}{j}\, dt
            \vspace{3mm}\\
            &=& \displaystyle \frac{e^{t \log (2e j(\log (2ej))^\gamma)}}{\log (2e j(\log (2ej))^\gamma)} \frac{1}{j}\bigg|^1_{\frac{1}{2}}=\frac{2e j(\log (2ej))^\gamma-(2e j(\log (2ej))^\gamma)^{1/2}}{ \log (2e j)+\log(\log (2ej))^\gamma}\frac{1}{j}
            \vspace{3mm}\\
            &=& \displaystyle \frac{2e j(\log (2ej))^\gamma(1-(2e j(\log (2ej))^\gamma)^{-1/2})}{j \log (2ej)(1+\frac{\log ((\log(2ej))^\gamma)}{\log (2ej)})}
            \vspace{3mm}\\
            &=& \displaystyle \frac{2e (\log (2ej))^\gamma(1+o(1))}{\log (2ej)(1+o(1))}
            \vspace{3mm}\\
            &=& \displaystyle 2e(\log(2ej))^{\gamma-1}(1+o(1))\to 0, \ \ \hbox{since $\gamma<1$},
    \end{array}
\end{equation}

\begin{equation}\nonumber
    \begin{array}{lcl}
         C_j&=&\displaystyle \int_{\frac{1}{j}}^{\frac{2}{j}} \Big(\frac{e(\log \frac{e}{s})^\gamma}{s}\Big)^{\frac{1}{j^{N-1}s^{N-1}}}\, ds\le  \int_{\frac{1}{j}}^{\frac{2}{j}} \Big(ej(\log (ej))^\gamma \Big)^{\frac{1}{j^{N-1}s^{N-1}}}\, ds
         \vspace{3mm}\\
         &\xlongequal{js=1/t}&\displaystyle \int^{\frac{1}{2}}_1 \Big(ej(\log (ej) )^\gamma \Big)^{t^{N-1}}\frac{1}{j}\frac{-1}{t^2}\,dt\le 4\int_{\frac{1}{2}}^1 \Big(ej(\log (ej))^\gamma \Big)^{t^{N-1}}\frac{1}{j}\,dt
          \vspace{3mm}\\
          &\le & \displaystyle 4\int_{\frac{1}{2}}^1 \Big(ej(\log (ej))^\gamma \Big)^{t}\frac{1}{j}\,dt=4\int_{\frac{1}{2}}^1 e^{t \log (ej(\log (ej))^\gamma)}\frac{1}{j}\,dt
          \vspace{3mm}\\
          &= & \displaystyle \frac{4}{j}\frac{e^{t \log (e j(\log (ej))^\gamma)}}{\log (e j(\log (ej))^\gamma)} \bigg|^1_{\frac{1}{2}}=\frac{4e j(\log (ej))^\gamma(1+o(1))}{j \log (ej)(1+o(1))}\to 0.
    \end{array}
\end{equation}

Next, we show that
\begin{equation}\nonumber
    D_j=\displaystyle \int_{\frac{2}{j}}^{1} \Big(\frac{e(\log \frac{e}{s})^\gamma}{s}\Big)^{\frac{1}{j^{N-1}s^{N-1}}}\, ds\to 1 \ \hbox{as}\ j\to \infty.
\end{equation}
Indeed, we write
\begin{equation}\label{eq:81}\nonumber
    \displaystyle \int_{\frac{2}{j}}^{1} \Big(\frac{e(\log \frac{e}{s})^\gamma}{s}\Big)^{\frac{1}{j^{N-1}s^{N-1}}}\, ds=\int_0^1\chi_{[\frac{2}{j}, 1]}\Big(\frac{e(\log \frac{e}{s})^\gamma}{s}\Big)^{\frac{1}{j^{N-1}s^{N-1}}}\, ds
\end{equation}
and note that on $[\frac{2}{j}, 1]$ we have $\frac{1}{(js)^{N-1}}\le \frac{1}{2^{N-1}}$ and
\begin{equation}\nonumber
    \chi_{[\frac{2}{j}, 1]}\Big(\frac{e(\log \frac{e}{s})^\gamma}{s}\Big)^{\frac{1}{j^{N-1}s^{N-1}}}\le \chi_{[\frac{2}{j}, 1]}\Big(\frac{e(\log \frac{e}{s})^\gamma}{s}\Big)^{\frac{1}{2^{N-1}}}\in L^1(0,1).
\end{equation}
Furthermore,
\begin{equation}\nonumber
    \chi_{[\frac{2}{j}, 1]}\Big(\frac{e(\log \frac{e}{s})^\gamma}{s}\Big)^{\frac{1}{j^{N-1}s^{N-1}}}=\chi_{[\frac{2}{j}, 1]}e^{\frac{1}{(js)^{N-1}}\big(\log\frac{e}{s}+\log((\log\frac{e}{s})^\gamma)\big)}\to 1 \ \hbox{as}\ j \to \infty
\end{equation}
pointwise in $s$. Hence, we conclude by the Dominated Lebesgue theorem.
\end{proof}
We insert here the
\begin{proof}[Proof of Corollary \ref{cor}:]
 indeed, it is sufficient to show that
$$ J_{1,h}(w_j) \to +\infty \ \hbox{ as }\  j \to \infty .$$
We follow the estimate above for:
$\gamma=1$, {\bf lower estimate}
\begin{equation}\nonumber
	\begin{array}{lcl}
		J_{1,h}(w_j(s))
		&\ge & \displaystyle \int_{\frac{1}{2j}}^{\frac{1}{j}} e^{\big(\log (ej)+\log(\log (ej))+ h(j)\big)js}\, ds
		\vspace{3mm}\\
		&\xlongequal{js=t} & \displaystyle \int_{\frac{1}{2}}^{1} e^{\big(\log (ej)+\log(\log (ej))+h(j)\big)t}\,\frac{1}{j}\, dt
		\vspace{3mm}\\
		&=& \displaystyle \frac{1}{j}\, \frac{e^{\big(\log (ej)+\log(\log (ej))+h(j)\big)t}}{\log (ej)+\log(\log (ej))+h(j)}\bigg|_{\frac{1}{2}}^1
		\vspace{3mm}\\
		&=& \displaystyle \frac{1}{j} \frac{ej\log (ej)e^{h(j)}-\big(ej\log (ej)e^{h(j)}\big)^{1/2}}{\log (ej)\big(1+\frac{\log (\log (ej))+ h(j)}{\log (ej)}\big)}
		\vspace{3mm}\\
		&=& \displaystyle  \frac{e\log (ej)e^{h(j)}\big(1-\frac{(ej\log (ej)e^{h(j)})^{1/2}}{ej\log (ej)e^{h(j)}}\big)}{\log (ej)(1+o(1))}
		\vspace{3mm}\\
		&=& \displaystyle e\, e^{h(j)}(1+o(1)) \to \infty \ , \ \hbox{ as } \ j \to \infty.
	\end{array}
\end{equation}
\end{proof}

\begin{proof}[Proof of Theorem\,\ref{Th2}]
From Proposition\,\ref{p3.1}, we have proved that if $\gamma>1$, then $$\sup\limits_{v\in E_N}J_\gamma(v)=+\infty.$$ Thus, it is enough to show that if $\gamma=1$, then
$$\sup\limits_{\int_0^1 |v'|^N=1,v(0)=0}J_\gamma(v)<\infty.$$
We assume now that $\gamma=1$. We also consider three cases.
\par \smallskip \noindent
{\bf Case a)} Suppose there exists $\delta_0 > 0$ such that $\delta=\delta(v)\geq \delta_0~ \forall~v\in E_N$, i.e.
$$1-\delta(v)=\max\limits_{s\in(0,1]}
\frac{|v(s)|^N}{s^{N-1}} = \frac{|v(a)|^N}{a^{N-1}}
\leq 1 - \delta_0~~\forall~v\in E_N. $$
Then we are done, since then
\begin{equation}\nonumber
    \int_0^1 \left(\frac{e\log\frac{e}{s}}{s}\right)^{\frac{v^N(s)}{s^{N-1}}}ds \leq \int_0^1 \left(\frac{e\log\frac{e}{s}}{s}\right)^{1-\delta_0}ds\leq c.
\end{equation}
\par \smallskip \noindent
{\bf Case b)} Next suppose that there exists a fixed $a_0 > 0$ such that the maximum point $a = a(v) \geq a_0$ for all $v\in E_N$.
Then we are also done: indeed, we then have by \eqref{20240709-e1}
\begin{equation}\nonumber
    \int_{\frac{a_0}{2^N}}^1\left(\frac{e\log\frac{e}{s}}{s}\right)^{\frac{v^N(s)}{s^{N-1}}}ds\leq \int_{\frac{a_0}{2^N}}^1\frac{e\log\frac{e}{s}}{s}ds\le\frac{e(2^N-a_0)}{a_0}\log(\frac{e\,2^N}{a_0}),
\end{equation}
while in the integral
$$\int_0^{\frac{a_0}{2^N}}\left(\frac{e\log\frac{e}{s}}{s}\right)^{\frac{v^N(s)}{s^{N-1}}}ds$$
we use Lemma \ref{Key3}
to get that for $s<\frac{a_0}{2^N}$ and $\delta\leq \frac{1}{2^{N+2}}$
(for all $v\in\{v\in E_N: \delta=\delta(v)>\frac{1}{2^{N+2}}\}$ then the above integral is finite by the same argument of Case a)
\begin{equation*}
\aligned
\frac{v(s)^N}{s^{N-1}}\leq&\frac{1}{s^{N-1}}\bigg[s\frac{1}{a_0^{\frac{1}{N}}}+s^{\frac{N-1}{N}}(2\delta)^{\frac{1}{N}}\bigg]^N\\
\leq&\frac{1}{s^{N-1}}2^{N-1}\left[\frac{s^N}{a_0}+s^{N-1}(2\delta)\right]\\
<&\frac{1}{2}+2^{N}\delta\leq\frac{3}{4}
\endaligned
\end{equation*}
and then
$$\int_0^{\frac{a_0}{2^N}}\left(\frac{e\log\frac{e}{s}}{s}\right)^{\frac{v^N(s)}{s^{N-1}}}ds\leq \int_0^{\frac{a_0}{2^N}}\left(\frac{e\log\frac{e}{s}}{s}\right)^{\frac{3}{4}} ds\leq c.
$$
\par \medskip \noindent
{\bf Case c)} Suppose that there exist sequences $v_j\in E_N$, $a_j \to 0$ and $\delta_j \to 0$ such that
\begin{equation}\label{eq:87}
    \max\limits_{s\in(0,1]}
\frac{|v_j(s)|^N}{s^{N-1}} = \frac{|v_j(a_j)|^N}{a_j^{N-1}}
= 1 - \delta_j.
\end{equation}
First note that by \eqref{20240709-e1},
\begin{equation}\label{eq:88}\nonumber
\aligned
    &\int_0^1 e^{\big(\log\frac{e\log\frac{e}{s}}{s}\big)\frac{|v_j(s)|^N}{s^{N-1}}}ds
    =\int_0^1 \Big(\frac{e\log\frac{e}{s}}{s}\Big)^{\frac{|v_j(s)|^N}{s^{N-1}}}ds \\
    =&\int_0^1 e^{\frac{|v_j(s)|^N}{s^{N-1}}} \Big(\frac{\log\frac{e}{s}}{s}\Big)^{\frac{|v_j(s)|^N}{s^{N-1}}}ds \\
    \le &\ e\int_0^1 \Big(\frac{\log\frac{e}{s}}{s}\Big)^{\frac{|v_j(s)|^N}{s^{N-1}}}ds.
    \endaligned
\end{equation}
To estimate the integral above, we divide the interval into two parts and discuss each interval in two cases:

1. The interval $(0, a_j):$ We distinguish two cases:
\par \smallskip \noindent
{\it Case 1:} there exists $d_0>0$ such that $\delta_j\log\frac{1}{a_j}\ge d_0$, for all $j$.
\par \smallskip \noindent
{\it Case 2:} it holds $\delta_j\log\frac{1}{a_j}\to 0$.

 2. The interval $(a_j, 1):$
We distinguish again the two cases:
\par \smallskip \noindent
{\it Case 1}: there exists $d_0 > 0$ such that $\delta_j\log\frac{1}{a_j}\ge d_0$, for all $j$.
\par \smallskip \noindent
{\it Case 2}: it holds $\delta_j\log\frac{1}{a_j}\to 0$.
\vspace{3mm}
\par \medskip \indent
{\bf 1. The interval $(0, a_j).$}
\par \smallskip \noindent
{\it Case 1:} We now select a fixed large $c>0$ and a fixed small $\nu>0$ which will be determined later, and first consider \\
(i) the left sub-interval $(0, (1-\nu)a_j)$; \\
(ii) the right sub-interval $I_j^0:=\big[a_j(1-c\delta_j), a_j\big]$.

For the middle part of the interval $(0, a_j)$, we divide it into the following union of small intervals: \\
(iii) $\bigcup_{k=1}^{k_j(\nu)-1}I_j^k$, where $k_j(\nu)\in \mathbb{N}$ denote the smallest integer such that $k_j(\nu)c\delta_j\ge \nu$, and $$
    I_j^k=[a_j(1-(k+1)c\delta_j), a_j(1-kc\delta_j)].
$$

(i) The interval $(0, (1-\nu)a_j)$. To estimate $\int_0^{(1-\nu)a_j} \Big(\frac{\log\frac{e}{s}}{s}\Big)^{\frac{|v_j(s)|^N}{s^{N-1}}}ds$, we first
use Lemma\,\ref{Key3} to show that for $s\in (0, (1-\nu)a_j)$
\begin{equation}\label{eq:15}\nonumber
    \begin{array}{lcl}
         \displaystyle &&\frac{|v_j(s)|^N}{s^{N-1}}
         \vspace{3mm}\\
         &\le& \displaystyle \frac{1}{s^{N-1}}\Big|s\frac{1}{a_j^{\frac{1}{N}}}+s^{\frac{N-1}{N}}(2\delta_j)^{\frac{1}{N}}\Big|^N
         \vspace{3mm}\\
         &=& \displaystyle \frac{1}{s^{N-1}}\Big|\frac{s^N}{a_j}+Ns^{\frac{N-1}{N}}(2\delta_j)^{\frac{1}{N}}\Big(\frac{s}{a_j^{\frac{1}{N}}}\Big)^{N-1}+\text{\footnotesize $\frac{N(N-1)}{2}$}s^{\frac{2(N-1)}{N}}(2\delta_j)^{\frac{2}{N}}\frac{s^{N-2}}{a_j^{\frac{N-2}{N}}}
         +\cdots+s^{N-1}(2\delta_j)\Big|
         \vspace{3mm}\\
         &=& \displaystyle \frac{s}{a_j}+N(\frac{s}{a_j})^{\frac{N-1}{N}}(2\delta_j)^{\frac{1}{N}}+\text{\footnotesize$\frac{N(N-1)}{2}$}(\frac{s}{a_j})^{\frac{N-2}{N}}(2\delta_j)^{\frac{2}{N}}+\cdots+2\delta_j
         \vspace{3mm}\\
         &\le& \displaystyle (1-\nu)+N(1-\nu)^{\frac{N-1}{N}}(2\delta_j)^{\frac{1}{N}}+\text{\footnotesize $\frac{N(N-1)}{2}$}(1-\nu)^{\frac{N-2}{N}}(2\delta_j)^{\frac{2}{N}}+\cdots+2\delta_j
         \vspace{3mm}\\
         &\le& \displaystyle 1-\frac{\nu}{2}
    \end{array}
\end{equation}
and then
\begin{equation}\nonumber
    \int_0^{(1-\nu)a_j} \big(\frac{\log\frac{e}{s}}{s}\big)^{\frac{|v_j(s)|^N}{s^{N-1}}}\,ds\le \int_0^{(1-\nu)a_j} \big(\frac{\log\frac{e}{s}}{s}\big)^{1-\frac{\nu}{2}}\,ds\le c.
\end{equation}

(ii) The intervals $I_j^0:=\big[a_j(1-c\delta_j), a_j\big]$. Using \eqref{eq:87}, we can first estimate,
\begin{equation}\nonumber
\begin{array}{lcl}
     J_j^0&=&\displaystyle \int^{a_j}_{a_j(1-c\delta_j)} \Big(\frac{\log\frac{e}{s}}{s}\Big)^{\frac{|v_j(s)|^N}{s^{N-1}}}ds
     \vspace{3mm}\\
     &\le& \displaystyle \int_{a_j(1-c\delta_j)}^{a_j} e^{\log (\frac{\log\frac{e}{s}}{s})^{(1-\delta_j)}}\, ds
     \vspace{3mm}\\
     &\le& \displaystyle ca_j\delta_je^{(\log \frac{2}{a_j}+\log\log\frac{2e}{a_j})(1-\delta_j)}
     \vspace{3mm}\\
     &\le& \displaystyle ca_j\delta_j\frac{2}{a_j} \log(\frac{2e}{a_j}) e^{-\delta_j\log \frac{2}{a_j}}
     \vspace{3mm}\\
     &=& \displaystyle c\,2\,\frac{\log\frac{2e}{a_j}}{\log\frac{2}{a_j}}\, \delta_j\log\frac{2}{a_j}\ e^{-\delta_j\log \frac{2}{a_j}}
     \vspace{3mm}\\
     &\le&\displaystyle 3c
\end{array}
\end{equation}
for $j$ large, because $\lim\limits_{j\to \infty}\frac{\log(\frac{2e}{a_j})}{\log(\frac{2}{a_j})}=1$ and $xe^{-x}$ becomes maximal for $x=1$.
\par \medskip

(iii) $\bigcup_{k=1}^{k_j(\nu)}I_j^k$. Before estimating the integral on this interval, we first prove the following lemma.
\begin{lemma}\label{l4.2}
     Let $c\ge 2N^2+1 $, then

\begin{equation}\nonumber
    \frac{|v_j(s)|^N}{s^{N-1}}\le 1-k\delta_j, \ \ \forall s\in I_j^k,\ \, k=1,2,\dots, k_j(\nu)-1.
\end{equation}
\end{lemma}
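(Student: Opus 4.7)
The plan is to invoke Lemma \ref{Key2} with $w=v_j$ and $a=a_j$, reducing the claim to a purely algebraic inequality in $c$ and $N$ that is essentially sharp at $c=2N^2+1$.

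Since $v_j(a_j)/a_j=(1-\delta_j)^{1/N}a_j^{-1/N}$, setting $r:=s/a_j$ and factoring out $a_j^{(N-1)/N}$ in the conclusion of Lemma \ref{Key2} yields
\begin{equation*}
v_j(s)\le a_j^{(N-1)/N}\bigl[\,r(1-\delta_j)^{1/N}+(1-r)^{1/2}\delta_j^{1/2}(1-\delta_j)^{-(N-2)/(2N)}\,\bigr].
\end{equation*}
Raising to the $N$th power, dividing by $s^{N-1}=a_j^{N-1}r^{N-1}$, and pulling one factor of $r$ out of the bracket will give the master bound
\begin{equation*}
\frac{v_j(s)^N}{s^{N-1}}\le r\,P^N,\qquad P:=(1-\delta_j)^{1/N}+r^{-1}(1-r)^{1/2}\delta_j^{1/2}(1-\delta_j)^{-(N-2)/(2N)}.
\end{equation*}

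Next, on $I_j^k$ I will use $r\le 1-kc\delta_j$, $1-r\le(k+1)c\delta_j$ and $r\ge 1-(k+1)c\delta_j\ge 1-2\nu$ to obtain $P\le(1-\delta_j)^{1/N}+(1+O(\nu))\sqrt{(k+1)c}\,\delta_j$. Expanding $P^N$ by the binomial theorem, with the tail terms controlled through $Q^j\le((k+1)c\delta_j^2)^{j/2}\le(2\nu\delta_j)^{j/2}$, will give
\begin{equation*}
r\,P^N\le 1-\bigl[\,1+kc-N\sqrt{(k+1)c}\,\bigr]\delta_j+O(\nu\delta_j)+o(\delta_j).
\end{equation*}

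The heart of the argument is then the purely algebraic inequality $k(c-1)+1\ge N\sqrt{(k+1)c}$, equivalently $(k(c-1)+1)^2\ge N^2(k+1)c$. At $c=2N^2+1$ this reduces to the explicit identity
\begin{equation*}
(2kN^2+1)^2-N^2(k+1)(2N^2+1)=2N^4(2k+1)(k-1)+N^2(3k-1)+1,
\end{equation*}
which is strictly positive for every integer $k\ge 1$; the resulting $k$-dependent margin $\Delta_k:=k(c-1)+1-N\sqrt{(k+1)c}$ satisfies $\Delta_k\ge\Delta_1>0$, a constant depending only on $N$. Fixing $\nu$ small enough in terms of $N$ will absorb the $O(\nu\delta_j)+o(\delta_j)$ error into this margin and yield $v_j(s)^N/s^{N-1}\le 1-k\delta_j$ on every $I_j^k$.

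The hard part will be the error control when $k$ is large: since $k$ can reach $k_j(\nu)\sim\nu/(c\delta_j)$, individual terms like $N\sqrt{(k+1)c}\,\nu\delta_j$ do not vanish absolutely as $\delta_j\to 0$. The saving observation is that $\Delta_k$ also grows in $k$ at the rate $\sqrt{(k+1)c}$, so the ratio error/margin stays uniformly bounded by a constant multiple of $\nu\sqrt{c}$, which is made arbitrarily small by the choice of $\nu$.
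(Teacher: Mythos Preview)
Your approach is correct and essentially mirrors the paper's: both invoke Lemma~\ref{Key2}, substitute $v_j(a_j)/a_j=(1-\delta_j)^{1/N}a_j^{-1/N}$, Taylor-expand on each $I_j^k$, and reduce to the same algebraic inequality $k(c-1)+1>N\sqrt{(k+1)c}$, which the paper simply asserts holds for $c\ge 2N^2+1$ while you verify it explicitly at the endpoint. Your final paragraph on the $k$-uniformity of the error (using that the margin $\Delta_k$ grows at least as fast as $\sqrt{(k+1)c}$, so the ratio error/margin is $O(\nu)$) is in fact more careful than the paper, which writes $O((kc\delta_j)^2)=o(kc\delta_j)$ and leaves the absorption of the remainder into the margin implicit.
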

\begin{proof}
    In fact, using Lemma\,\ref{Key2}, we can bound $v_j(s)$, $s\in I_j^k$, as follows
\begin{equation}\label{eq:91}\nonumber
    v_j(s)\le s\,\frac{v_j(a_j)}{a_j}+(a_j-s)^{\frac{1}{2}}\sqrt{\delta_j}\big(\frac{v_j(a_j)}{a_j}\big)^{-\frac{N-2}{2}},\ \ s\le a_j.
\end{equation}
Since
\begin{equation}\label{eq:92}
    \frac{|v_j(a_j)|^N}{a^{N-1}_j}=1-\delta_j\ \ \hbox{implies}\ \frac{v_j(a_j)}{a_j}=(1-\delta_j)^{\frac{1}{N}}a^{-\frac{1}{N}}_j
\end{equation}
we get
\begin{equation}\nonumber
    v_j(s)\le s(1-\delta_j)^{\frac{1}{N}}a^{-\frac{1}{N}}_j+(a_j-s)^{\frac{1}{2}}\sqrt{\delta_j}(1-\delta_j)^{-\frac{N-2}{2N}}a^{\frac{N-2}{2N}}_j,\ \ s\le a_j.
\end{equation}
By direct calculations, one then has
\begin{equation}\label{eq:94}
    \begin{array}{lcl}
         \displaystyle \frac{|v_j(s)|^N}{s^{N-1}}
         &\le& \frac{1}{s^{N-1}} \Big|s(1-\delta_j)^{\frac{1}{N}}a^{-\frac{1}{N}}_j+(a_j-s)^{\frac{1}{2}}\sqrt{\delta_j}(1-\delta_j)^{-\frac{N-2}{2N}}a^{\frac{N-2}{2N}}_j\Big|^N
         \vspace{3mm}\\
         &=& \displaystyle \Big|\frac{s^{\frac{1}{N}}}{a^{\frac{1}{N}}_j}(1-\delta_j)^{\frac{1}{N}}+\big(\frac{a_j}{s^{\frac{2(N-1)}{N}}}-s^{\frac{2-N}{N}}\big)^{\frac{1}{2}}\sqrt{\delta_j}(1-\delta_j)^{-\frac{N-2}{2N}}a^{\frac{N-2}{2N}}_j\Big|^N
         \vspace{3mm}\\
          &=& \displaystyle \Big|\frac{s^{\frac{1}{N}}}{a^{\frac{1}{N}}_j}(1-\delta_j)^{\frac{1}{N}}+\frac{a^{\frac{N-2}{2N}}_j}{s^{\frac{N-2}{2N}}}(\frac{a_j}{s}-1)^{\frac{1}{2}}\sqrt{\delta_j}(1-\delta_j)^{-\frac{N-2}{2N}}\Big|^N.
         \vspace{3mm}\\
    \end{array}
\end{equation}
Based on this we further obtain the following estimate for $s\in I_j^k$,
\begin{equation}\nonumber
\aligned
     &\frac{|v_j(s)|^N}{s^{N-1}}
     \vspace{3mm}\\ \le &\bigg|\frac{\big(a_j(1-kc\delta_j)\big)^{\frac{1}{N}}(1-\delta_j)^{\frac{1}{N}}}{a^{\frac{1}{N}}_j}+\frac{a^{\frac{N-2}{2N}}_j \sqrt{\delta_j}(1-\delta_j)^{-\frac{N-2}{2N}}}{(a_j(1-(k+1)c\delta_j))^{\frac{N-2}{2N}}}\left(\frac{a_j}{a_j(1-(k+1)c\delta_j)}-1\right)^{\frac{1}{2}}\bigg|^N
     \vspace{3mm}\\
     =&\bigg|(1-kc\delta_j)^{\frac{1}{N}}(1-\delta_j)^{\frac{1}{N}} +\frac{1}{(1-(k+1)c\delta_j)^{\frac{N-2}{2N}}}\left(\frac{1}{1-(k+1)c\delta_j}-1\right)^{\frac{1}{2}}\sqrt{\delta_j}(1-\delta_j)^{-\frac{N-2}{2N}}\bigg|^N.
\endaligned
\end{equation}
We now select a fixed small $\nu>0$, and note that $k_j(\nu)\in \mathbb{N}$ denote the smallest integer such that $k_j(\nu)c\delta_j\ge \nu$; then $O((kc\delta_j)^2)=o(kc\delta_j)$, for all $k=1,2,\dots, k_j(\nu)-1$.

By employing the Taylor series expansion, we then continue the estimate
\begin{equation}\nonumber
    \begin{array}{lcl}
         \displaystyle \frac{|v_j(s)|^N}{s^{N-1}}
     &\le& \displaystyle \bigg|\Big(1-\frac{1}{N}kc\delta_j+O((kc\delta_j)^2)\Big)\big(1-\frac{1}{N}\delta_j+O(\delta^2_j) \big) \vspace{3mm}\\
&&\displaystyle+\Big(1+\hbox{$\frac{N-2}{2N}$}(k+1)c\delta_j+O\big(((k+1)c\delta_j)^2\big) \Big)
     \vspace{3mm}\\
     &&\displaystyle \cdot\Big(1+(k+1)c\delta_j+O\big(((k+1)c\delta_j)^2\big)-1\Big)^\frac{1}{2}\delta^{\frac{1}{2}}_j\Big(1+\hbox{$\frac{N-2}{2N}$}\delta_j+O((\delta_j)^2)\Big)\bigg|^N

     \vspace{3mm}\\
     &=&\displaystyle \bigg|1-\frac{1}{N} \delta_j(kc+1)+O((kc\delta_j)^2)+\Big(1+\hbox{$\frac{N-2}{2N}$}(k+1)c\delta_j+O\big(((k+1)c\delta_j)^2\big)\Big)
     \vspace{3mm}\\
     &&\displaystyle\cdot\Big((k+1)c\delta_j+O\big(((k+1)c\delta_j)^2\big)\Big)^\frac{1}{2}\delta^{\frac{1}{2}}_j\Big(1+\hbox{$\frac{N-2}{2N}$}\delta_j+O((\delta_j)^2)\Big)\bigg|^N
     \vspace{3mm}\\
     &=&\displaystyle \Big|1-\frac{1}{N}  \delta_j(kc+1)+O((kc\delta_j)^2)+((k+1)c)^{\frac{1}{2}}\delta_j\Big|^N
     \vspace{3mm}\\
     &=&\displaystyle \Big|1-\delta_j\frac{1}{N}\Big((kc+1)-N((k+1)c)^{\frac{1}{2}}\Big)+O((kc\delta_j)^2\Big|^N
     \vspace{3mm}\\
     &=&\displaystyle 1-\delta_j\Big((kc+1)-N((k+1)c)^{\frac{1}{2}}\Big)+O((kc\delta_j)^2.
    \end{array}
\end{equation}
Ultimately, it can be verified that
\begin{equation}\nonumber
    (kc+1)-N((k+1)c)^{\frac{1}{2}}>k, \ \hbox{for}\ c\ge 2N^2+1
\end{equation}
and hence the lemma holds for all $k=1,2,\dots, k_j(\nu)-1$ and $j$ sufficiently large.
\end{proof}

\par \medskip
Now, we can consider the integrals over the intervals $I_j^k=\big[a_j(1-(k+1)c\delta_j), a_j(1-kc\delta_j)\big]$. By Lemma\,\ref{l4.2}, we get
\begin{equation*}
\begin{array}{lcl}
         \displaystyle
J_j^k:&=&\displaystyle \int^{a_j(1-kc\delta_j)}_{a_j(1-(k+1)c\delta_j)} \big(\frac{\log\frac{e}{s}}{s}\big)^{\frac{|v_j(s)|^N}{s^{N-1}}}\, ds
     \le \displaystyle \int^{a_j(1-kc\delta_j)}_{a_j(1-(k+1)c\delta_j)} \big(\frac{\log\frac{e}{s}}{s}\big)^{(1-k\delta_j)}\, ds
      \vspace{3mm}\\
     &\le& \displaystyle \int^{a_j(1-kc\delta_j)}_{a_j(1-(k+1)c\delta_j)} e^{(\log \frac{2}{a_j}+\log\log \frac{2e}{a_j})(1-k\delta_j)}\, ds
     =\displaystyle ca_j\delta_j e^{(\log \frac{2}{a_j}+\log\log \frac{2e}{a_j})(1-k\delta_j)}
     \vspace{3mm}\\
     &\le& \displaystyle ca_j\delta_j \frac{2}{a_j}\log\frac{2}{a_j}\, e^{-k\delta_j\log \frac{2}{a_j}}
    = \displaystyle 2c\,\delta_j \log \frac{2}{a_j}\, e^{-k\delta_j\log \frac{2}{a_j}}
      \vspace{3mm}\\
     &\le& \displaystyle 2c\,\frac{d_j}{e^{kd_j}} \ \hbox{with}\ d_j:=\delta_j\log\frac{2}{a_j}.
     \end{array}
\end{equation*}
In conclusion, we derive
$$
  \int^{a_j}_{(1-\nu)a_j}  \big(\frac{\log\frac{e}{s}}{s}\big)^{\frac{|v_j(s)|^N}{s^{N-1}}}\, ds\le  \sum^{k_j(\nu)}_{k=0} J_j^k\le 2c\sum_{k\ge 1}\frac{d_j}{e^{kd_j}} .
$$
Applying the ratio test, we get by $d_j\geq\delta_j\log\frac{1}{a_j}\ge d_0$ that
$$\frac{\frac{d_j}{e^{(k+1)d_j}}}{\frac{d_j}{e^{kd_j}}}=\frac{1}{e^{d_j}}\leq\frac{1}{e^{d_0}}, $$
and hence the infinite series $\sum_{k\ge 1}\frac{d_j}{e^{kd_j}}$ converges uniformly with respect to $j$.

\vspace{1mm}\par \medskip \noindent
{\it Case 2:} $\delta_j\log\frac{1}{a_j}\to 0$.
\par \noindent
In this case we choose another fixed small $\nu>0$, and  consider:\\
(i) the left sub-interval $(0, (1-\nu)a_j)$; \\
(ii) the right part of the interval $((1-\nu)a_j, a_j)\subset\bigcup_{k=1}^{k_j(\nu)}I_j^k$, where $$I_j^k=\big[a_j(1-\frac{k}{\log\frac{1}{a_j}}), a_j(1-\frac{k-1}{\log\frac{1}{a_j}})\big], \,\ k=1,2,\cdots, k_j(\nu),$$
and
$k_j(\nu)\in \mathbb{N}$ denote the smallest integer such that $\frac{k_j (\nu)}{\log\frac{1}{a_j}}\ge \nu$.\\
For the interval $(0, (1-\nu)a_j)$, reapplying \eqref{eq:15} we have
\begin{equation}\nonumber
    \int^{(1-\nu)a_j}_{0}  \Big(\frac{\log\frac{e}{s}}{s}\Big)^{\frac{v^N_j(s)}{s^{N-1}}}\, ds\le  \int^{(1-\nu)a_j}_{0}  \Big(\frac{\log\frac{e}{s}}{s}\Big)^{1-\frac{\nu}{2}}\, ds\le C.
\end{equation}
For the interval $\bigcup_{k=1}^{k_j(\nu)}I_j^k\subset [\frac{a_j}{2}, a_j]$, equation \eqref{eq:94} yields the estimate
\begin{equation}\nonumber
    \begin{array}{lcl}
        \displaystyle \frac{v^N_j(s)}{s^{N-1}}
        &\le& \displaystyle \Big|\frac{s^{\frac{1}{N}}}{a^{\frac{1}{N}}_j}(1-\delta_j)^{\frac{1}{N}}+\frac{a^{\frac{N-2}{2N}}_j}{s^{\frac{N-2}{2N}}}(\frac{a_j}{s}-1)^{\frac{1}{2}}\sqrt{\delta_j}(1-\delta_j)^{-\frac{N-2}{2N}}\Big|^N
        \vspace{3mm}\\
        &=& \displaystyle \Big|\frac{s}{a_j}(1-\delta_j)+N\frac{s^{\frac{N-1}{N}}}{a^{\frac{N-1}{N}}_j}(1-\delta_j)^{\frac{N-1}{N}}\frac{a^{\frac{N-2}{2N}}_j}{s^{\frac{N-2}{2N}}}(\frac{a_j}{s}-1)^{\frac{1}{2}}\sqrt{\delta_j}(1-\delta_j)^{-\frac{N-2}{2N}}+\cdots
        \vspace{3mm}\\
        &&+N\frac{s^{\frac{1}{N}}}{a^{\frac{1}{N}}_j}(1-\delta_j)^{\frac{1}{N}}\frac{a^{\frac{(N-2)(N-1)}{2N}}_j}{s^{\frac{(N-2)(N-1)}{2N}}}(\frac{a_j}{s}-1)^{\frac{N-1}{2}}\delta_j^{\frac{N-1}{2}}(1-\delta_j)^{-\frac{(N-2)(N-1)}{2N}}
        \vspace{3mm}\\
        &&+\frac{a^{\frac{N-2}{2}}_j}{s^{\frac{N-2}{2}}}(\frac{a_j}{s}-1)^{\frac{N}{2}}\delta_j^{\frac{N}{2}}(1-\delta_j)^{-\frac{N-2}{2}}\Big|.
    \end{array}
\end{equation}
Therefore, for $s\in I_j^k,~ k=1,2,\cdots, k_j(\nu)$, applying binomial theorem we find
\begin{equation}\nonumber
    \begin{array}{lcl}
         \displaystyle \frac{v^N_j(s)}{s^{N-1}}
        &\le& \displaystyle (1-\frac{k-1}{\log\frac{1}{a_j}})+\frac{N}{\sqrt{a_j}}\sqrt{\delta_j}\big(\frac{a_jk}{\log\frac{1}{a_j}}\big)^{\frac{1}{2}}(1-\delta_j)^{-\frac{N-2}{2N}}+\cdots
        \vspace{3mm}\\
        &&+N\frac{(\frac{k}{\log\frac{1}{a_j}})^{\frac{N-1}{2}}}{(1-\frac{k}{\log\frac{1}{a_j}})^{N-2}}\delta_j^{\frac{N-1}{2}}(1-\delta_j)^{-\frac{(N-2)(N-1)}{2N}}+\frac{(\frac{k}{\log\frac{1}{a_j}})^{\frac{N}{2}}\delta^{\frac{N}{2}}_j}{(1-\frac{k}{\log\frac{1}{a_j}})^\frac{2(N-1)}{2}}(1-\delta_j)^{-\frac{N-2}{2}}
         \vspace{3mm}\\
         &= & \displaystyle 1-\frac{k-1}{\log\frac{1}{a_j}}+o(\frac{k}{\log\frac{1}{a_j}})
          \vspace{3mm}\\
         &\le & \displaystyle 1-\frac{1}{2}\frac{k-1}{\log\frac{1}{a_j}}\ \ \ \hbox{for $j$ sufficient large}.
    \end{array}
\end{equation}
Thus, we see that for the integrals over $I_j^k\subset [\frac{a_j}{2},a_j]$
\begin{equation}\nonumber
    \begin{array}{lcl}
         \displaystyle J_j^k:&=&\displaystyle \int_{a_j(1-\frac{k}{\log\frac{1}{a_j}})}^{a_j(1-\frac{k-1}{\log\frac{1}{a_j}})} \Big(\frac{\log\frac{e}{s}}{s}\Big)^{\frac{v^N_j(s)}{s^{N-1}}}\,ds
         \vspace{3mm}\\
         &\le& \displaystyle \int_{a_j(1-\frac{k}{\log\frac{1}{a_j}})}^{a_j(1-\frac{k-1}{\log\frac{1}{a_j}})}\ e^{(\log\frac{2}{a_j}+\log\log\frac{2e}{a_j})(1-\frac{1}{2}\frac{k-1}{\log\frac{1}{a_j}})}\, ds
         \vspace{3mm}\\
         &\le& \displaystyle a_j\frac{1}{\log\frac{1}{a_j}}\frac{2}{a_j}\log\big(\frac{2e}{a_j}\big)e^{-\frac{k-1}{2\log\frac{1}{a_j}}(\log\frac{2}{a_j}+\log\log\frac{2e}{a_j})}
         \vspace{3mm}\\
         &\le& \displaystyle 3\big(\frac{1}{\sqrt{e}}\big)^{\frac{k-1}{2}}\ \ \hbox{for $j$ sufficient large} \ ,
    \end{array}
\end{equation}
given that $$\frac{1}{\log\frac{1}{a_j}}\log(\frac{2e}{a_j})\to 1~~\text{and}~~\frac{k-1}{2\log\frac{1}{a_j}}(\log\frac{2}{a_j}+\log\log\frac{2e}{a_j})\to \frac{k-1}{2}.$$

Notice that $k_j(\nu)$ is the smallest integer such that $\frac{k_j (\nu)}{\log\frac{1}{a_j}}\ge \nu$, $\nu > 0$ small. Hence, it follows
again that
\begin{equation}\nonumber
    \int_{(1-\nu)a_j}^{a_j}  \Big(\frac{\log\frac{e}{s}}{s}\Big)^{\frac{v^N_j(s)}{s^{N-1}}}\, ds\le \sum_{k=1}^{k_j(\nu)} J_j^k\le 3\sum_{k\ge 1} \big(\frac{1}{e^{1/4}}\big)^{k-1} \le C.
\end{equation}

\par \medskip \indent
{\bf 2. The interval $(a_j, 1):$ }
\par \smallskip \noindent
{\it Case 1}: there exists $d_0 >0$ such that $\delta_j \log\frac{1}{a_j}\ge d_0$, for all $j$.\\
Likewise, we now we choose another fixed small $\nu>0$ and examine \\
(i) the left sub-interval $I_j^0:=\big[a_j, a_j(1+c\delta_j)\big]$; \\
(ii) the right sub-interval $\big[b_j, 1\big]$, $b_j=a_j(1+k_j(\nu)c\delta_j)$, where $k_j(\nu)\in \mathbb{N}$ denote the smallest integer such that $k_j(\nu)c\delta_j\ge \nu$;
 \\
(iii) the middle part $\bigcup_{k=1}^{k_j(\nu)-1}I_j^k$,   $$
    I_j^k=\big[a_j(1+kc\delta_j), a_j(1+(k+1)c\delta_j)\big] \quad k=1,2\cdots, k_j(\nu)-1.
$$
\par \smallskip \indent
(i) The left sub-interval $I_j^0:=\big[a_j, a_j(1+c\delta_j)\big]$. Firstly, on the interval $[a_j, 1]$, it follows from Lemma\,\ref{Key1} that
\begin{equation}\label{eq:102}\nonumber
    w(s)\leq w(a_j)+(s-a_j)^{1-\frac{1}{N}}\delta^{\frac{1}{N}},\quad\forall s\in[a_j,1],
\end{equation}
which leads to $v_j$ by \eqref{eq:92},
\begin{equation}\nonumber
    v_j(s)\le a_j^{\frac{N-1}{N}}(1-\delta_j)^{\frac{1}{N}}+(s-a_j)^{1-\frac{1}{N}}\delta^{\frac{1}{N}},\ \ a_j\le s\le 1.
\end{equation}
For $s\in [a_j, a_j(1 + c\delta_j)]$ we have by definition
\begin{equation}\nonumber
    \frac{|v_j(s)|^N}{s^{N-1}}\le 1-\delta_j
\end{equation}
and hence
\begin{equation}\nonumber
\begin{array}{lcl}
     \displaystyle K_j^0&:=&\displaystyle \int^{a_j(1 + c\delta_j)}_{a_j} \Big(\frac{\log\frac{e}{s}}{s}\Big)^{\frac{|v_j(s)|^N}{s^{N-1}}}\,ds\vspace{3mm}\\
     &\le& \displaystyle \int^{a_j(1 + c\delta_j)}_{a_j} e^{(\log \frac{1}{s}+\log\log\frac{e}{s})(1-\delta_j)}\, ds
     \vspace{3mm}\\
     &\le&\displaystyle  \int^{a_j(1 + c\delta_j)}_{a_j} e^{(\log \frac{1}{a_j}+\log\log\frac{e}{a_j})(1-\delta_j)}\,ds
     \vspace{3mm}\\
     &=&\displaystyle a_jc\delta_j e^{(\log \frac{1}{a_j}+\log\log\frac{e}{a_j})}e^{(\log \frac{1}{a_j}+\log\log\frac{e}{a_j})-\delta_j}
     \vspace{3mm}\\
     &=&\displaystyle a_jc\delta_j\frac{1}{a_j}\log\frac{e}{a_j}e^{(\log \frac{1}{a_j}+\log\log\frac{e}{a_j})-\delta_j}
     \vspace{3mm}\\
     &\le&\displaystyle c\delta_j\log\frac{e}{a_j}e^{\log\frac{1}{a_j}(-\delta_j)}
      \vspace{3mm}\\
     &=&\displaystyle c\frac{\delta_j\log\frac{e}{a_j}}{e^{\delta_j\log\frac{1}{a_j}}}\le c.
\end{array}
\end{equation}
(ii) Before we discuss the interval $[a_j(1 + k_j(\nu)c\delta_j, 1]$, we introduce a lemma similar to Lemma \ref{l4.2}. Lemma \ref{l4.3} will be also used in (iii) the interval $I_j^k$.
\par \smallskip \noindent
\begin{lemma}\label{l4.3}
    If $(c-1)^{\frac{N}{N-1}}>2N^{\frac{N}{N-1}}c$, then for $s\in I_j^k, k=1,2,\dots,k_j(\nu)-1$ we have
\begin{equation}\nonumber
    \frac{|v_j(s)|^N}{s^{N-1}}\le 1-k\delta_j.
\end{equation}
\end{lemma}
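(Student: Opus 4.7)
The strategy is to mirror the proof of Lemma \ref{l4.2}, but since here $s \ge a_j$ I will replace Lemma \ref{Key2} by Lemma \ref{Key1}. The first step is to insert the identity $v_j(a_j) = (1-\delta_j)^{1/N} a_j^{(N-1)/N}$ from \eqref{eq:92} into Lemma \ref{Key1} to obtain
\begin{equation*}
v_j(s) \le (1-\delta_j)^{1/N} a_j^{(N-1)/N} + (s - a_j)^{(N-1)/N} \delta_j^{1/N}, \qquad s \in [a_j, 1].
\end{equation*}
For $s \in I_j^k = [a_j(1+kc\delta_j), a_j(1+(k+1)c\delta_j)]$, I substitute the bounds $s - a_j \le (k+1)c\delta_j\, a_j$ in the numerator and $s \ge a_j(1+kc\delta_j)$ in the denominator to arrive at
\begin{equation*}
\frac{v_j^N(s)}{s^{N-1}} \le \frac{\bigl[(1-\delta_j)^{1/N} + ((k+1)c)^{(N-1)/N} \delta_j\bigr]^N}{(1+kc\delta_j)^{N-1}},
\end{equation*}
after using $\delta_j^{(N-1)/N}\delta_j^{1/N} = \delta_j$ to collect the fractional powers.

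The next step is a Taylor expansion of both numerator and denominator around $\delta_j = 0$. Writing $\eta := \delta_j\bigl[((k+1)c)^{(N-1)/N} - 1/N\bigr] + O(\delta_j^2)$, the numerator equals $(1+\eta)^N = 1 + N\eta + O(\eta^2)$, while the denominator equals $1 + (N-1)kc\delta_j + O((kc\delta_j)^2)$. Division and collection of the $\delta_j$-linear terms give
\begin{equation*}
\frac{v_j^N(s)}{s^{N-1}} \le 1 + \delta_j\Bigl[N\bigl((k+1)c\bigr)^{(N-1)/N} - 1 - (N-1)kc\Bigr] + R_j(k),
\end{equation*}
where the remainder $R_j(k)$ collects higher-order contributions. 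Since $kc\delta_j \le \nu$ and $\eta = O(\delta_j^{1/N})$ uniformly in $k \le k_j(\nu)-1$, the remainder is of lower order and can be absorbed into the main estimate by choosing $j$ sufficiently large.

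The remaining task—and the main obstacle—is the purely algebraic verification that the hypothesis $(c-1)^{N/(N-1)} > 2 N^{N/(N-1)} c$ implies
\begin{equation*}
N\bigl((k+1)c\bigr)^{(N-1)/N} \le 1 + k\bigl[(N-1)c - 1\bigr], \qquad \forall k \ge 1,
\end{equation*}
which is exactly the inequality needed to conclude $\frac{v_j^N(s)}{s^{N-1}} \le 1 - k\delta_j$. I would check this first at $k=1$ (the binding case, where the hypothesis on $c$ enters directly after raising both sides to the $N/(N-1)$ power), and then verify that the inequality is preserved for all $k \ge 2$ by comparing the growth rates: the right-hand side is linear in $k$ with slope $(N-1)c - 1$, while the left-hand side grows like $N c^{(N-1)/N} k^{(N-1)/N}$, which is of sublinear order. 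Thus if the inequality holds for $k = 1$ it holds for all larger $k$, establishing the lemma for $j$ large enough that the $O(\delta_j^{1/N})$ remainder does not spoil the strict sign.
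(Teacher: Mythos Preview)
Your overall strategy matches the paper's: start from Lemma \ref{Key1}, substitute the endpoints of $I_j^k$, and compare first-order terms in $\delta_j$. The one substantive difference is in how you treat the denominator $(1+kc\delta_j)^{N-1}$. You Taylor-expand it to get $1 + (N-1)kc\delta_j + O((kc\delta_j)^2)$, which produces the algebraic condition
\[
N\bigl((k+1)c\bigr)^{(N-1)/N} \le 1 + k\bigl[(N-1)c - 1\bigr].
\]
The paper instead adds and subtracts $kc\delta_j$ in the numerator and writes
\[
\frac{1+A\delta_j}{(1+kc\delta_j)^{N-1}} \;=\; (1+kc\delta_j)^{2-N} + \frac{(A-kc)\delta_j}{(1+kc\delta_j)^{N-1}},
\]
then uses the \emph{exact} inequality $(1+kc\delta_j)^{2-N}\le 1$ (valid for $N\ge 2$). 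This yields the cleaner condition
\[
N\bigl((k+1)c\bigr)^{(N-1)/N} < k(c-1),
\]
and at $k=1$ this is \emph{precisely} the hypothesis $(c-1)^{N/(N-1)} > 2N^{N/(N-1)}c$ after raising to the power $N/(N-1)$.

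Your $k=1$ inequality, by contrast, reads $N(2c)^{(N-1)/N}\le (N-1)c$, which after raising to the power $N/(N-1)$ becomes $2N^{N/(N-1)}c \le (N-1)^{N/(N-1)}c^{N/(N-1)}$ --- not the stated hypothesis. So your claim that the hypothesis ``enters directly'' at $k=1$ is inaccurate for your formulation. That said, since $(N-1)kc + 1 \ge kc$ for $N\ge 2$, your algebraic condition is \emph{weaker} than the paper's and is therefore still implied by the hypothesis; the proof goes through, just not via the direct identification you describe. The paper's add-and-subtract trick is what makes the hypothesis appear exactly, and it also spares you one layer of Taylor remainder from the denominator.
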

\begin{proof}
    From Lemma\,\ref{Key1},
\begin{equation}\nonumber
    v_j(s)\le v(a_j)+(s-a_j)^{1-\frac{1}{N}}\delta^{\frac{1}{N}},\,\ \forall s\in [a_j, 1]
\end{equation}
we calculate for $s\in \big(a_j(1+kc\delta_j), a_j(1+(k+1)c\delta_j)\big)$

\begin{equation}\nonumber
\begin{array}{lcl}
     v_j(s)
     &\le& a_j^{\frac{N-1}{N}}(1-\delta_j)^{\frac{1}{N}}+\big(a_j(k+1)c\delta_j\big)^{1-\frac{1}{N}}\delta^{\frac{1}{N}}_j
     \vspace{3mm}\\
     &\le& \displaystyle a_j^{\frac{N-1}{N}}\Big((1-\delta_j)^{\frac{1}{N}}+\big((k+1)c\delta_j\big)^{1-\frac{1}{N}}\delta^{\frac{1}{N}}_j \Big)
     \vspace{3mm}\\
     &\le& \displaystyle a_j^{\frac{N-1}{N}} \Big(1+\big((k+1)c\delta_j\big)^{1-\frac{1}{N}}\delta^{\frac{1}{N}}_j\Big).
\end{array}
\end{equation}
Similarly, we can estimate for $s\in \big(a_j(1+kc\delta_j), a_j(1+(k+1)c\delta_j)\big)$
\begin{equation}\nonumber
    \begin{array}{lcl}
         \displaystyle \frac{|v_j(s)|^N}{s^{N-1}}
         &\le&  \displaystyle \frac{\Big|a_j^{\frac{N-1}{N}} \Big(1+((k+1)c\delta_j)^{1-\frac{1}{N}}\delta^{\frac{1}{N}}_j\Big)\Big|^N}{\big(a_j(1+kc\delta_j)\big)^{N-1}}
         \vspace{3mm}\\
     &\le& \displaystyle \frac{ 1+N\big((k+1)c\big)^{1-\frac{1}{N}}\delta_j}{(1+kc\delta_j)^{N-1}}+O\Big(\big((kc)^{\frac{N-1}{N}}\delta_j\big)^2\Big)
     \vspace{3mm}\\
     &=& \displaystyle  \frac{ 1+N\big((k+1)c\big)^{1-\frac{1}{N}}\delta_j+(kc\delta_j)-(kc\delta_j)}{(1+kc\delta_j)^{N-1}}+O\Big(\big((kc)^{\frac{N-1}{N}}\delta_j\big)^2\Big)
      \vspace{3mm}\\
     &\le& \displaystyle 1+\frac{ N\big((k+1)c\big)^{1-\frac{1}{N}}\delta_j-(kc\delta_j)}{(1+kc\delta_j)^{N-1}}+O\Big(\big((kc)^{\frac{N-1}{N}}\delta_j\big)^2\Big)
     \vspace{3mm}\\
     &\le& \displaystyle 1-\Big((kc)-N\big((k+1)c\big)^{1-\frac{1}{N}}\Big)\delta_j+O\Big(\big((kc)^{\frac{N-1}{N}}\delta_j\big)^2\Big).
    \end{array}
\end{equation}
Setting $\delta_j^k:= \Big(kc-N\big((k+1)c\big)^{1-\frac{1}{N}}\Big)\delta_j+O\Big(\big((kc)^{\frac{N-1}{N}}\delta_j\big)^2\Big)$, we see that
\begin{equation}\label{eq:111}\nonumber
    \delta_j^k>k\delta_j,\quad\hbox{if}\ (c-1)^{\frac{N}{N-1}}>2N^{\frac{N}{N-1}}c\ \hbox{and}\ j \ \hbox{sufficient large}
\end{equation}
and hence
\begin{equation}\nonumber
    \displaystyle \frac{|v_j(s)|^N}{s^{N-1}}\le 1-\delta_j^k\le 1-k\delta_j.
\end{equation}
\end{proof}

\noindent Now let us denote
\begin{equation}\nonumber
    \begin{array}{lcl}
         k_j:=k_j(\nu)\ \ \ \hbox{and}\ \ b_j=a_j(1+k_jc\delta_j)
    \end{array}
\end{equation}
and note that by the inequality
\begin{equation}\nonumber
\begin{array}{lcl}
     v_j(s)&=&\displaystyle v_j(b_j)+\int_{b_j}^s v'_j(s)\,ds
     \vspace{3mm}\\
     &\le& \displaystyle v_j(b_j)+(s-b_j)^{\frac{N-1}{N}}\big(\int_{b_j}^s|v'_j(s)|^N\,ds\big)^{\frac{1}{N}}
     \vspace{3mm}\\
     &\le& \displaystyle v_j(b_j)+(s-b_j)^{\frac{N-1}{N}}\delta_j^{\frac{1}{N}},\ \ s\in (b_j,1].
\end{array}
\end{equation}
On the other hand, by Lemma\,\ref{l4.3}, we know that
\begin{equation}\nonumber
   \frac{|v_j(b_j)|^N}{b_j^{N-1}}\le 1-k_j\delta_j\le 1-\frac{\nu}{c},\ \ \ s\in I_j^k
\end{equation}
which yields
\begin{equation}\nonumber
    v_j(b_j)\le b^{\frac{N-1}{N}}_j(1-\frac{\nu}{c})^{\frac{1}{N}}
\end{equation}
subsequently, following from the above
\begin{equation}\nonumber
    v_j(s)\le  b^{\frac{N-1}{N}}_j(1-\frac{\nu}{c})^{\frac{1}{N}}+(s-b_j)^{\frac{N-1}{N}}\delta_j^{\frac{1}{N}},
\end{equation}
which in turn implies
\begin{equation}\nonumber
    \begin{array}{lcl}
         \displaystyle \frac{|v_j(s)|^N}{s^{N-1}}&\le& \displaystyle \frac{\big| b^{\frac{N-1}{N}}_j(1-\frac{\nu}{c})^{\frac{1}{N}}+(s-b_j)^{\frac{N-1}{N}}\delta_j^{\frac{1}{N}}\big|^N}{s^{N-1}}
         \vspace{3mm}\\
     &\le& \displaystyle \big|(1-\frac{\nu}{c})^{\frac{1}{N}}+\delta_j^{\frac{1}{N}}\big|^N
      \vspace{3mm}\\
     &\le& \displaystyle 1-\frac{\nu}{2c}\ \ \hbox{for $j$ sufficiently large.}
    \end{array}
\end{equation}
Then
\begin{equation}\nonumber
    \displaystyle \int_{b_j}^1 \Big(\frac{\log\frac{e}{s}}{s}\Big)^{\frac{|v_j(s)|^N}{s^{N-1}}}\,ds\le \int_{b_j}^1 \Big(\frac{\log\frac{e}{s}}{s}\Big)^{1-\frac{\nu}{2c}} \,ds\le c\ \ \hbox{for all $v\in E_N$}.
\end{equation}

(iii) Next, let us consider the intervals
\begin{equation}\nonumber
    I_j^k=\big[a_j(1+kc\delta_j), a_j(1+(k+1)c\delta_j)\big] \quad k=1,2,\dots,k_j(\nu)-1.
\end{equation}

By Lemma \ref{l4.3}, it is easy to check on the intervals $I_j^k$,
\begin{equation}\nonumber
  \begin{array}{lcl}
 J_j^k:&=&\displaystyle \int_{a_j(1+kc\delta_j)}^{a_j(1+(k+1)c\delta_j)} \Big(\frac{\log\frac{e}{s}}{s}\Big)^{\frac{|v_j(s)|^N}{s^{N-1}}}\,ds
 \vspace{3mm}\\
 &\le &\displaystyle \int_{a_j(1+kc\delta_j)}^{a_j(1+(k+1)c\delta_j)} e^{(\log\frac{1}{a_j}+\log\log\frac{e}{a_j})(1-k\delta_j)}\,ds
 \vspace{3mm}\\
 &\le &\displaystyle a_jc\delta_j \frac{1}{a_j}\log\big(\frac{e}{a_j}\big)e^{-k\delta_j\log\frac{1}{a_j}}
 \vspace{3mm}\\
 &\le&\displaystyle 2c\delta_j \log\big(\frac{1}{a_j}\big)e^{-k\delta_j\log\frac{1}{a_j}}
 \vspace{3mm}\\
 &= &\displaystyle \frac{2cd_j}{e^{kd_j}},
  \end{array}
\end{equation}
where $d_j:=\delta_j\log\frac{1}{a_j}$.

Then by the assumption $d_j=\delta_j\log\frac{1}{a_j}\ge d_0$ we obtain that
\begin{equation}\nonumber
    \begin{array}{lcl}
         \displaystyle \int_{a_j}^{a_j(1+k_j(\nu)c\delta_j)} \Big(\frac{\log\frac{e}{s}}{s}\Big)^{\frac{|v_j(s)|^N}{s^{N-1}}}\,ds&\le& \displaystyle \sum_{k=0}^{k_j(\nu)} J_j^k
         \vspace{3mm}\\
 &\le &\displaystyle 2c\sum_{k\ge0} \frac{d_j}{e^{kd_j}} \le c.
    \end{array}
\end{equation}

\par \medskip
\noindent{\it Case 2:} $\delta_j\log\frac{1}{a_j}\to 0$, that is $\delta_j=o(\frac{1}{\log(\frac{1}{a_j})})$.
\par \smallskip \noindent
Let us deal with now \\
(i) the left sub-interval $\big[a_j, b_j\big]= \bigcup_{k=0}^{k_j(\nu)-1}I_j^k$, $b_j=a_j\Big(1+\frac{k_j(\nu)}{\log\frac{1}{a_j}}\Big)$, where $k_j(\nu)$ denote the smallest integer with $\frac{k_j(\nu)}{\log\frac{1}{a_j}}\ge \nu$, for some small $\nu>0$ and
\begin{equation}\nonumber
I_j^k=\big[a_j\big(1+\frac{k}{\log\frac{1}{a_j}}\big), a_j\big(1+\frac{k+1}{\log\frac{1}{a_j}}\big)\big], \,\ k=0,1,2,\cdots,k_j(\nu)-1.
\end{equation}
(ii) the right sub-interval $\big[b_j, 1\big]$.

(i) For $s\in I_j^k$, we have for $j$ sufficiently large
\begin{equation}\nonumber
    \frac{v^N(s)}{s^{N-1}}\le 1-\frac{1}{2}\frac{k}{\log\frac{1}{a_j}}.
\end{equation}
Indeed, we know from Lemma\,\ref{Key} and Lemma\,\ref{Key1} that for $a_j\le s\le a_j(1+\frac{k+1}{\log\frac{1}{a_j}})$
\begin{equation}\nonumber
    \begin{array}{lcl}
         v_j(s)&\le& v(a_j)+(s-a_j)^{1-\frac{1}{N}}\delta_j^{\frac{1}{N}}
         \vspace{3mm}\\
         &\le& a_j^{\frac{N-1}{N}}(1-\delta_j)^{\frac{1}{N}}+a_j^{\frac{N-1}{N}}(\frac{k+1}{\log\frac{1}{a_j}})^{\frac{N-1}{N}}\delta_j^{\frac{1}{N}},
    \end{array}
\end{equation}
which suggests that for $s\in \big[a_j(1+\frac{k}{\log\frac{1}{a_j}}), a_j(1+\frac{k+1}{\log\frac{1}{a_j}})\big]$
\begin{equation}\label{202516-e1}
    \begin{array}{lcl}
         \displaystyle \frac{v^N_j(s)}{s^{N-1}}&\le& \displaystyle
         \frac{\Big|a_j^{\frac{N-1}{N}}(1-\delta_j)^{\frac{1}{N}}+a_j^{\frac{N-1}{N}}\big(\frac{k+1}{\log\frac{1}{a_j}}\big)^{\frac{N-1}{N}}\delta_j^{\frac{1}{N}}\Big|^N}{a_j^{N-1}(1+\frac{k}{\log\frac{1}{a_j}})^{N-1}}
         \vspace{3mm}\\
         &=& \displaystyle \displaystyle
         \frac{\Big|(1-\delta_j)^{\frac{1}{N}}+\big(\frac{k+1}{\log\frac{1}{a_j}}\big)^{\frac{N-1}{N}}\delta_j^{\frac{1}{N}}\Big|^N}{(1+\frac{k}{\log\frac{1}{a_j}})^{N-1}}
         \vspace{3mm}\\
         &=& \displaystyle \frac{(1-\delta_j)+N(1-\delta_j)^{\frac{N-1}{N}}\big(\frac{k+1}{\log\frac{1}{a_j}}\big)^{\frac{N-1}{N}}\delta_j^{\frac{1}{N}}+\cdots+(\frac{k+1}{\log\frac{1}{a_j}})^{N-1}\delta_j}{(1+\frac{k}{\log\frac{1}{a_j}})^{N-1}}
         \vspace{3mm}\\
         &\le & \displaystyle 1-\frac{k}{\log\frac{1}{a_j}}+o\Big(\frac{(k+1)^{\frac{N-1}{N}}}{\log\frac{1}{a_j}}\Big)+o\Big(\frac{(k+1)^{\frac{2(N-1)}{N}}}{\log\frac{1}{a_j}}\Big)+\cdots+o\Big(\frac{(k+1)^{N-1}}{\log\frac{1}{a_j}}\Big)
          \vspace{3mm}\\
         &\le & \displaystyle 1-\frac{1}{2}\frac{k}{\log\frac{1}{a_j}}\ \ \ \hbox{for $j$ sufficient large}.
    \end{array}
\end{equation}
We then derive the estimates for the integrals over the intervals
$$
I_j^k=\big[a_j(1+\frac{k}{\log\frac{1}{a_j}}), a_j(1+\frac{k+1}{\log\frac{1}{a_j}})\big]\ , \  k=0,1,2, \dots,k_j(\nu)-1 $$
that
\begin{equation}\nonumber
    \begin{array}{lcl}
         \displaystyle K_j^k:&=&\displaystyle \int_{a_j(1+\frac{k}{\log\frac{1}{a_j}})}^{a_j(1+\frac{k+1}{\log\frac{1}{a_j}})} (\frac{\log\frac{e}{s}}{s})^{\frac{v^N_j(s)}{s^{N-1}}}\,ds
         \vspace{3mm}\\
         &\le& \displaystyle \int_{a_j(1+\frac{k}{\log\frac{1}{a_j}})}^{a_j(1+\frac{k+1}{\log\frac{1}{a_j}})} e^{(\log\frac{1}{a_j}+\log\log\frac{e}{a_j})(1-\frac{1}{2}\frac{k}{\log\frac{1}{a_j}})}\, ds
         \vspace{3mm}\\
         &\le& \displaystyle a_j\frac{1}{\log\frac{1}{a_j}}\ \frac{1}{a_j}\log(\frac{e}{a_j})e^{-\frac{k}{2}}
         \vspace{3mm}\\
         &\le& \displaystyle 2e^{-\frac{k}{2}}\ \ \hbox{for $j$ sufficient large}.
    \end{array}
\end{equation}
Then it follows again that
\begin{equation}\nonumber
    \int_{a_j}^{b_j}\Big(\frac{\log\frac{e}{s}}{s}\Big)^{\frac{|v_j(s)|^N}{s^{N-1}}}\le \sum_{k=0}^{k_j(\nu)}K_j^k\le 2\sum_{k\ge 0} e^{-\frac{k}{2}}\le c.
\end{equation}

(ii) For the interval $[b_j,1]$, we have for $b_j\le s\le1$,
\begin{equation}\nonumber
    \begin{array}{lcl}
         v_j(s)-v_j(b_j)&=&\displaystyle \int_{b_j}^s v'_j(t)\, dt
         \vspace{3mm}\\
         &\le& \displaystyle (s-b_j)^{\frac{N-1}{N}}\Big(\int_{b_j}^s|v'_j(t)|^N\, dt\Big)^{\frac{1}{N}}
          \vspace{3mm}\\
         &\le& \displaystyle (s-b_j)^{\frac{N-1}{N}}(\delta_j)^{\frac{1}{N}}.
    \end{array}
\end{equation}
Since by \eqref{202516-e1}
\begin{equation}\nonumber
    \frac{v^N_j(b_j)}{b^{N-1}_j}\le1-\frac{1}{2}\frac{k_j(\nu)}{\log\frac{1}{a_j}}\le 1-\frac{1}{2}\nu
\end{equation}
it follows that
\begin{equation}\nonumber
    v_j(s)\le v_j(b_j)+(s-b_j)^{\frac{N-1}{N}}(\delta_j)^{\frac{1}{N}}\le b_j^{\frac{N-1}{N}}(1-\frac{1}{2}\nu)^{\frac{1}{N}}+(s-b_j)^{\frac{N-1}{N}}(\delta_j)^{\frac{1}{N}}
\end{equation}
and then
\begin{equation}\nonumber
    \begin{array}{lcl}
         \displaystyle \frac{|v_j(s)|^N}{s^{N-1}}&\le& \displaystyle \frac{\big|b_j^{\frac{N-1}{N}}(1-\frac{1}{2}\nu)^{\frac{1}{N}}+(s-b_j)^{\frac{N-1}{N}}(\delta_j)^{\frac{1}{N}}\big|^N}{s^{N-1}}
         \vspace{3mm}\\
         &\le& \displaystyle \big|(1-\frac{1}{2}\nu)^{\frac{1}{N}}+\delta_j^{\frac{1}{N}}\big|^N\ \ \ \ s\in (b_j, 1)
         \vspace{3mm}\\
         &\le& \displaystyle 1-\frac{\nu}{4} \ ,\quad \hbox{for $j$ sufficiently large.}
    \end{array}
\end{equation}
Then, finally, we obtain
\begin{equation}\nonumber
    \int_{b_j}^1 \Big(\frac{\log\frac{e}{s}}{s}\Big)^{\frac{|v_j(s)|^N}{s^{N-1}}}\,ds\le\int_{b_j}^1 \Big(\frac{\log\frac{e}{s}}{s}\Big)^{1-\frac{\nu}{4}}\,ds\le c.
\end{equation}
This completes the proof of Theorem\,\ref{Th2}.
\end{proof}

\end{document}